\documentclass{article}
\author{Alexandre Goy}
\usepackage[utf8]{inputenc}

\title{Infinite Trees}
\date{\today}

\usepackage{amsmath}
\usepackage{amsthm}
\usepackage{amssymb}
\usepackage{stmaryrd}

\usepackage{tikzit}

\tikzstyle{tree node}=[fill=black, draw=black, shape=circle, minimum size=1mm, inner sep=0.5 pt]
\tikzstyle{ghost}=[fill=none, draw=none, shape=circle, tikzit fill=white, tikzit draw={rgb,255: red,191; green,191; blue,191}]

\tikzstyle{tree edge}=[-]
\tikzstyle{to infinity}=[-, densely dotted, tikzit draw={rgb,255: red,191; green,191; blue,191}]
\tikzstyle{main branch}=[-, very thick, tikzit draw={rgb,255: red,231; green,0; blue,3}, draw=red]
\tikzstyle{new edge style 0}=[-, draw=red, tikzit draw={rgb,255: red,255; green,191; blue,191}, densely dashed, very thick]

\theoremstyle{definition}
\newtheorem{theorem}{Theorem}
\newtheorem{lemma}[theorem]{Lemma}
\newtheorem{proposition}[theorem]{Proposition}
\newtheorem{corollary}[theorem]{Corollary}

\theoremstyle{definition}
\newtheorem{definition}[theorem]{Definition}
\newtheorem{example}[theorem]{Example}

\theoremstyle{remark}
\newtheorem*{remark}{Remark}

\newcommand{\CB}{\mathsf{CB}}

\newcommand{\N}{\mathbb{N}}

\newcommand{\Pref}{\mathsf{Pref}}

\begin{document}

\maketitle

\abstract{A few notes about infinite trees in a descriptive set-theoretic setting.}

\tableofcontents

\section{Basics on Languages}

This section introduces notation for manipulating words and languages.\\

Let $\omega = \{0,1,\ldots\}$ be the first infinite ordinal, identified with $\N$. Let $\Sigma$ be a set called the \emph{alphabet}. Let $\Sigma^* = \bigcup_{n \in \omega} \Sigma^n$ be the set of finite words on $\Sigma$, $\Sigma^\omega$ be the set of infinite words on $\Sigma$, and $\Sigma^\infty = \Sigma^* + \Sigma^\omega = \bigcup_{\alpha \in \omega + 1} \Sigma^\alpha$ be the set of all words. Letters in a word are numbered starting from $1$. The length of a word $w$ is denoted by $|w|$, defining a length map $|-| : \Sigma^\infty \to \omega + 1$. Words will be typically written
\begin{align*}
     & w = w_1 w_2 \ldots w_n & \text{if } w \in \Sigma^n      & \text{ i.e. } |w| = n      \\
     & w = w_1 w_2 \ldots     & \text{if } w \in \Sigma^\omega & \text{ i.e. } |w| = \omega
\end{align*}
There is a unique word of length $0$ called the empty word and denoted by $\varepsilon$. Words of length $1$ and letters are identified, i.e., $\Sigma^1 \cong \Sigma$.

\paragraph{Concatenation.}
Concatenation of words has type $\Sigma^\infty \times \Sigma^\infty \to \Sigma^\infty$ and is denoted by juxtaposition. With the convention that left-concatenating an infinite word overwrites whatever is on the right, concatenation is associative with neutral element $\varepsilon$. For instance, $a^\omega b^\omega = a^\omega$. Fixing the first argument $u \in \Sigma^\infty$ yields a map $u : \Sigma^{\infty} \to \Sigma^\infty$. Applying covariant and contravariant powerset to this map generates
\begin{align*}
    u(\cdot) \colon & P(\Sigma^\infty) \to P(\Sigma^\infty) & u^{-1}(\cdot) \colon & P(\Sigma^\infty) \to P(\Sigma^\infty)           \\
                    & L \mapsto \{uv \mid v \in L\}         &                      & L \mapsto \{v \in \Sigma^\infty \mid uv \in L\}
\end{align*}
Note that these maps are degenerate when $u$ is an infinite word, as in that case $uL = \{u\}$ if $L \neq \emptyset$, $\emptyset$ else and $u^{-1}L = \Sigma^\infty$ if $u \in L$, $\emptyset$ else.
\begin{proposition}
    For all $u, v \in \Sigma^\infty$ and $L \in P(\Sigma^\infty)$
    \begin{enumerate}
        \item $u(vL) = (uv)L$
        \item $u^{-1}(v^{-1}L) = (vu)^{-1}L$
        \item $uu^{-1}L = L \cap u\Sigma^\infty$
        \item $u^{-1}uL = L$ if $u \in \Sigma^*$, $\emptyset$ if $L = \emptyset$, $\Sigma^\infty$ else
    \end{enumerate}
\end{proposition}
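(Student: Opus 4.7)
The plan is to treat each of the four items in order, unfolding the definitions of the direct- and inverse-image maps and appealing to associativity of concatenation, paying careful attention to the infinite-word overwriting convention.

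For item 1, I would simply compute $u(vL) = \{u w \mid w \in vL\} = \{u(vx) \mid x \in L\} = \{(uv)x \mid x \in L\} = (uv)L$, the middle equality being associativity of concatenation. For item 2, I would chase an element: $w \in u^{-1}(v^{-1}L)$ iff $uw \in v^{-1}L$ iff $v(uw) \in L$ iff $(vu)w \in L$ iff $w \in (vu)^{-1}L$. Notice that the swap $u,v \mapsto vu$ comes from the contravariance of preimage composed with the associativity rewrite $v(uw) = (vu)w$; this is the only nontrivial observation in the first two items.

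For item 3, unfold both sides: $uu^{-1}L = \{ux \mid x \in u^{-1}L\} = \{ux \mid ux \in L\}$. A word lies in this set iff it is of the form $uy$ (i.e.\ in $u\Sigma^\infty$) and belongs to $L$, which is exactly $L \cap u\Sigma^\infty$. No case split on whether $u$ is finite is needed here, because the overwriting convention affects both $u\Sigma^\infty$ and the form of elements of $uu^{-1}L$ in the same way.

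The main obstacle is item 4, where the finite/infinite distinction really matters. If $u \in \Sigma^*$, I would show $u^{-1}uL = L$ by left-cancellation: $v \in u^{-1}uL$ iff $uv \in uL$ iff $uv = uw$ for some $w \in L$, and since $u$ is finite this forces $v = w \in L$; conversely $v \in L$ trivially gives $uv \in uL$. If $u \in \Sigma^\omega$, I would use the degenerate description recalled in the paragraph preceding the proposition: $uL = \{u\}$ when $L \neq \emptyset$ and $uL = \emptyset$ otherwise. In the empty case, $u^{-1}\emptyset = \emptyset$, giving the second clause. In the nonempty case, $u^{-1}\{u\} = \{v \mid uv \in \{u\}\} = \{v \mid uv = u\}$, and since $u$ is infinite the overwriting convention gives $uv = u$ for every $v \in \Sigma^\infty$, so the set is $\Sigma^\infty$, matching the third clause. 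The only point requiring any care is making the overwriting convention explicit in this last computation; everything else is bookkeeping.
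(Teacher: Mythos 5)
Your proposal is correct and follows essentially the same route as the paper: items 1 and 2 are the element-level unfoldings of what the paper calls functoriality of the covariant and contravariant powerset, and your treatments of items 3 and 4 (including the observation that item 3 needs no case split and the finite/infinite case analysis in item 4 via the overwriting convention) match the paper's argument.
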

\begin{proof} Sketch:
    \begin{enumerate}
        \item This is functoriality of the covariant powerset.
        \item This is functoriality of the contravariant powerset.
        \item We have $uu^{-1}L = \{uv \mid uv \in L\} = L \cap u\Sigma^\infty$.
        \item We have $u^{-1}uL = \{v \in \Sigma^\infty \mid uv \in uL\}$. If $u \in \Sigma^*$, $uv \in uL \iff v \in L$ whence the result, else and if $L \neq \emptyset$, $uv \in uL \iff u \in \{u\}$ whence the result.
    \end{enumerate}
\end{proof}

Concatenation of languages $P(\Sigma^\infty) \times P(\Sigma^\infty) \to P(\Sigma^\infty)$ is also denoted by juxtaposition.

\paragraph{Restriction and Co-Restriction.}

The restriction of a word $w  \in \Sigma^\infty$ to a finite length $n \in \omega$ is defined by $w_{\mid n} = w_1 w_2 \ldots w_n$ if $n \leq |w|$, $w_{\mid n} = w$ else. For instance, $w_{|0} = \varepsilon$ is the empty word. The co-restriction of a word $w \in \Sigma^\infty$ to a finite length $n \in \omega$ is the unique word $w^{\mid n} \in \Sigma^\infty$ such that $w = w_{\mid n} w^{\mid n}$. For instance, $w^{\mid 0} = w$ and $w^{\mid n} = \varepsilon$ for any $n \geq |w|$. Note that $w^{\mid n} = w_{n+1} w_{n+2} \ldots$ for infinite words. Furthermore, for every $w \in \Sigma^\infty$ and $p,q \in \omega$ the equality
$w_{\mid p+q} = w_{\mid p} (w^{\mid p})_{\mid q}$ holds.

\paragraph{Prefixing.}

The map $\Pref : \Sigma^\infty \to P(\Sigma^*)$ assigns to a word the set of its finite prefixes, i.e., $\Pref(w) = \{w_{\mid n} \mid n \in \omega\}$. It is extended to $\Pref : P(\Sigma^\infty) \to P(\Sigma^*)$ by union. We have
\[ \Pref(L) = \{w_{\mid n} \mid w \in L, n \in \omega\} = \{u \in \Sigma^* \mid L \cap u\Sigma^\infty \neq \emptyset \} \]

\begin{proposition}
    For any $L, M \in P(\Sigma^\infty)$,
    \begin{enumerate}
        \item $L \cap \Sigma^* \subseteq \Pref(L)$
        \item $L \subseteq M \Rightarrow \Pref(L) \subseteq \Pref(M)$
        \item $\Pref(\Pref(L)) = \Pref(L)$
        \item $\Pref(LM) = \Pref(L) \cup L\Pref(M)$ if $M \neq \emptyset$
    \end{enumerate}
\end{proposition}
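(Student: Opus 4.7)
The plan is to handle each item in turn, with (1)--(3) being short and (4) carrying the main content. For (1), if $w \in L \cap \Sigma^*$ then $w = w_{\mid |w|} \in \Pref(w) \subseteq \Pref(L)$ directly from the definition. For (2), the formula $\Pref(L) = \bigcup_{w \in L} \Pref(w)$ makes the claim an immediate consequence of monotonicity of unions. For (3), applying (1) to the set $\Pref(L) \subseteq \Sigma^*$ gives $\Pref(L) \subseteq \Pref(\Pref(L))$, while the reverse inclusion is \emph{a prefix of a prefix is a prefix}, formally the identity $(w_{\mid n})_{\mid m} = w_{\mid \min(n,m)}$ one reads off the length-additivity rule $w_{\mid p+q} = w_{\mid p}(w^{\mid p})_{\mid q}$ stated above.

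For (4) I would show both inclusions. In the direction $\Pref(LM) \subseteq \Pref(L) \cup L\Pref(M)$, take $p = (uv)_{\mid n}$ with $u \in L$, $v \in M$, $n \in \omega$, and split on whether $u$ is finite. If $u$ is infinite, the convention for left-concatenating infinite words gives $uv = u$, so $p \in \Pref(u) \subseteq \Pref(L)$. If $u$ is finite, the length-additivity identity yields $p = u_{\mid n} \in \Pref(L)$ when $n \leq |u|$, and $p = u \cdot v_{\mid n-|u|} \in u \cdot \Pref(v) \subseteq L\Pref(M)$ when $n > |u|$. For the reverse inclusion I would use that $M \neq \emptyset$ to fix some $v \in M$: then every $w \in L$ yields $\Pref(w) \subseteq \Pref(wv) \subseteq \Pref(LM)$, and every $u \in L$ with every $q \in \Pref(v)$ satisfies $uq \in \Pref(uv) \subseteq \Pref(LM)$. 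Taking unions over $w \in L$ and over $v \in M$ delivers $\Pref(L) \cup L\Pref(M) \subseteq \Pref(LM)$.

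The main obstacle is a subtle edge case that appears in the reverse direction of (4). If $u \in L$ is infinite and $q \in \Pref(M)$ is any word, the convention gives $uq = u$, so $L\Pref(M)$ can contain the infinite word $u$; yet $\Pref(LM) \subseteq \Sigma^*$, so strict set-theoretic equality can fail when $L$ has infinite members. I would pause at this step to check whether the statement tacitly assumes $L \subseteq \Sigma^*$ (the natural reading, since prefixing is about extending finite context words by $L$) or whether the right-hand side is meant to be implicitly intersected with $\Sigma^*$. Under either convention the casework above combines into a complete proof without new ideas.
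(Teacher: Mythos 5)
Your proof is correct and follows the same route as the paper's, which simply declares items 1--3 obvious and disposes of item 4 with exactly the case split you spell out (a prefix of a word in $LM$ is a prefix of a word in $L$ or a word in $L$ followed by a prefix of a word in $M$, conversely using $M \neq \emptyset$). The edge case you flag in item 4 is real and the paper's one-line argument silently ignores it: if $u \in L$ is infinite and $M \neq \emptyset$, then $\varepsilon \in \Pref(M)$ gives $u = u\varepsilon \in L\Pref(M)$, whereas $\Pref(LM) \subseteq \Sigma^*$ contains no infinite word, so the stated equality literally fails unless one assumes $L \subseteq \Sigma^*$ or intersects the right-hand side with $\Sigma^*$ --- under either repair your casework is a complete proof.
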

\begin{proof} The first three statements are obvious. Note that this makes the restriction $\Pref : P(\Sigma^*) \to P(\Sigma^*)$ a closure operator. For the last one:
    \begin{enumerate}
        \item[4.] A prefix of a word in $LM$ is either a prefix of a word in $L$, or a word in $L$ then a prefix of a word in $M$, and conversely because $M$ is non-empty.
    \end{enumerate}
\end{proof}

\begin{example}[The binary alphabet]
    The \emph{binary alphabet} is $\Sigma = \{a,b\}$ where $a$ is interpreted as 'going left' and $b$ as 'going right'. The map $(-)^\perp : \Sigma \to \Sigma$ reverses directions as $a \mapsto b$ and $b \mapsto a$. It is extended letterwise to a map $(-)^\perp : \Sigma^\infty \to \Sigma^\infty$, for instance $(abba)^\perp = baab$.
\end{example}

\section{Basics on Trees}

This section introduces trees as they are usually defined in descriptive set theory. These trees can have finite or infinite depth, and they can have finite or infinite width depending on the alphabet $\Sigma$.\\

A language $L \in P(\Sigma^\infty)$ is \emph{prefix-closed} if $\Pref(L) = L$.

\begin{definition}[Tree]
    A \emph{tree} is a prefix-closed subset of $\Sigma^*$. The set of trees over $\Sigma$ is denoted by $T(\Sigma)$.
\end{definition}

\begin{proposition}
    The image of $\Pref : P(\Sigma^\infty) \to P(\Sigma^*)$ is $T(\Sigma)$.
\end{proposition}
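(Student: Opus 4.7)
The plan is to establish the set equality by showing both inclusions, each essentially handed to us by the previous proposition.

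For the inclusion $\Im(\Pref) \subseteq T(\Sigma)$, I would take an arbitrary $L \in P(\Sigma^\infty)$ and check that $\Pref(L)$ satisfies the two defining properties of a tree. The fact that $\Pref(L) \subseteq \Sigma^*$ is immediate from the codomain of $\Pref$. Prefix-closedness, i.e., $\Pref(\Pref(L)) = \Pref(L)$, is exactly point~3 of the previous proposition.

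For the reverse inclusion $T(\Sigma) \subseteq \Im(\Pref)$, given a tree $T$, the natural witness is $L = T$ itself, viewed as an element of $P(\Sigma^\infty)$ via $\Sigma^* \subseteq \Sigma^\infty$. I would then argue that $\Pref(T) = T$: the inclusion $T \subseteq \Pref(T)$ follows from point~1 of the previous proposition since $T \subseteq \Sigma^*$ gives $T = T \cap \Sigma^* \subseteq \Pref(T)$, and the reverse inclusion $\Pref(T) \subseteq T$ is precisely the definition of $T$ being prefix-closed. Hence $T$ lies in the image.

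There is no real obstacle here; the whole point is that the preceding proposition was designed to make this characterization trivial. The only thing to be careful about is the mild abuse of notation in viewing a subset of $\Sigma^*$ as a subset of $\Sigma^\infty$ when feeding it back into $\Pref$, but this is harmless since $\Sigma^* \subseteq \Sigma^\infty$.
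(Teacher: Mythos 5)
Your proof is correct and follows essentially the same route as the paper: prefix-closedness of $\Pref(L)$ via idempotence of $\Pref$, and exhibiting a tree $T$ as $\Pref(T)$ for the converse. The only cosmetic difference is that you re-derive $T \subseteq \Pref(T)$ from point~1 of the preceding proposition, whereas the paper's definition of prefix-closed is already the equality $\Pref(T) = T$, so that step is not needed.
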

\begin{proof}
    As $\Pref(\Pref(L)) = \Pref(L)$, the image is included in $T(\Sigma)$. Conversely, let $L \subseteq \Sigma^*$ be prefix-closed, then $L = \Pref(L)$ is in the image of $\Pref$.
\end{proof}

\begin{definition}
    Given a tree $T \in T(\Sigma)$, its set of infinite branches is
    \[ [T] = \{w \in \Sigma^\omega \mid \forall n \in \omega, w_{\mid n} \in T\} = \{w \in \Sigma^\omega \mid \Pref(w) \subseteq T\} \]
\end{definition}

\begin{proposition}
    For a tree $T \in T(\Sigma)$, $[T] = \{w \in \Sigma^\omega \mid \Pref(w) \cap T \text{ is infinite}\}$.
\end{proposition}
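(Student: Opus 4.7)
The plan is to prove the two inclusions separately, with the backward direction relying crucially on the prefix-closedness of $T$.

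For the forward inclusion, I would take $w \in [T]$, so by definition $\Pref(w) \subseteq T$, whence $\Pref(w) \cap T = \Pref(w)$. Since $w \in \Sigma^\omega$, the prefixes $w_{\mid 0}, w_{\mid 1}, w_{\mid 2}, \ldots$ are pairwise distinct (they have distinct lengths), so $\Pref(w)$ is infinite, and thus so is $\Pref(w) \cap T$. This step is routine.

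For the reverse inclusion, I would take $w \in \Sigma^\omega$ with $\Pref(w) \cap T$ infinite, and show every finite prefix $w_{\mid n}$ lies in $T$. The key observation is that $\Pref(w) = \{w_{\mid k} \mid k \in \omega\}$ is totally ordered by the prefix relation, and its elements have strictly increasing lengths. So if $\Pref(w) \cap T$ is infinite, it must contain words of arbitrarily large length; in particular, there exists $m \geq n$ with $w_{\mid m} \in T$. Since $w_{\mid n}$ is a prefix of $w_{\mid m}$ (by the identity $w_{\mid m} = w_{\mid n} (w^{\mid n})_{\mid m-n}$ mentioned earlier), prefix-closedness of $T$ gives $w_{\mid n} \in T$. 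As $n$ was arbitrary, $\Pref(w) \subseteq T$, i.e., $w \in [T]$.

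There is no real obstacle here; the statement is essentially a reformulation using that $\Pref(w)$ is linearly ordered by length and that $T$ is downward closed under this order. The only subtlety worth flagging is the implicit use of the pigeonhole-type fact that an infinite subset of a chain with length-bounded initial segments must contain elements of arbitrarily large length, which justifies extracting the required $m \geq n$.
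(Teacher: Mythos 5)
Your proposal is correct and follows essentially the same route as the paper's proof: the forward direction uses that $\Pref(w)$ is infinite, and the reverse direction extracts an arbitrarily long prefix $w_{\mid m} \in T$ with $m \geq n$ and concludes $w_{\mid n} \in T$ by prefix-closedness. The extra justification you flag (that an infinite subset of $\Pref(w)$ contains words of arbitrarily large length) is the same implicit step the paper relies on.
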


\begin{proof}
    We must show that for a tree $T$ and a word $w \in \Sigma^\omega$, $\Pref(w) \subseteq T$ iff $\Pref(w) \cap T$ is infinite. The left-to-right direction is obvious as $\Pref(w)$ is infinite. For the other direction, assume $\Pref(w) \cap T$ is infinite and let $n \in \omega$, we must show $w_{\mid n} \in T$. As $\Pref(w) \cap T$ is infinite, there is $N \geq n$ such that $w_{\mid N}  \in T$. As $T$ is prefix-closed and $n \leq N$, $w_{\mid n} = (w_{\mid N})_{\mid n} \in T$, whence the result.
\end{proof}

\begin{proposition}
    On the binary alphabet, taking infinite branches commutes with reversing directions: $[T^\perp] = [T]^\perp$.
\end{proposition}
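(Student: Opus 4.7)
The plan is to reduce the claim to two very simple facts about the involution $(-)^\perp$: it is length-preserving, and it commutes letterwise with the restriction operation. Once both are in hand, the proof is a one-line chain of equivalences.

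First I would record the key compatibility lemma: for every $w \in \Sigma^\infty$ and every $n \in \omega$,
\[ (w_{\mid n})^\perp = (w^\perp)_{\mid n}. \]
Since $(-)^\perp$ is applied letterwise and $|w^\perp| = |w|$, this is immediate from the definition of restriction: both sides are the word of length $\min(n,|w|)$ whose $i$-th letter is $(w_i)^\perp$. In particular, applying this to infinite words shows that $(-)^\perp$ maps $\Sigma^\omega$ to itself and is an involution there.

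Next I would chase definitions. Since $(-)^\perp$ is an involution on $\Sigma^*$, for any $T \in T(\Sigma)$ and any $v \in \Sigma^*$ we have $v \in T^\perp \iff v^\perp \in T$. Combined with the lemma, for every $w \in \Sigma^\omega$:
\begin{align*}
w \in [T^\perp] &\iff \forall n \in \omega,\ w_{\mid n} \in T^\perp \\
&\iff \forall n \in \omega,\ (w_{\mid n})^\perp \in T \\
&\iff \forall n \in \omega,\ (w^\perp)_{\mid n} \in T \\
&\iff w^\perp \in [T] \\
&\iff w \in [T]^\perp.
\end{align*}
This yields the double inclusion at once.

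I do not expect any real obstacle here; the only thing to be careful about is being explicit that $(-)^\perp$ acts on a language $L$ as $L^\perp = \{w^\perp \mid w \in L\}$, so that in particular $[T]^\perp \subseteq \Sigma^\omega$ (which follows from the involution preserving length) and the last equivalence $w^\perp \in [T] \iff w \in [T]^\perp$ is justified by $(w^\perp)^\perp = w$. Everything else is bookkeeping on finite prefixes.
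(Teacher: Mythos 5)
Your proof is correct and follows essentially the same route as the paper: both unfold the definition of $[-]$ and push $(-)^\perp$ through the prefix operation, the only difference being that you make the compatibility $(w_{\mid n})^\perp = (w^\perp)_{\mid n}$ explicit where the paper leaves it implicit in a change of variable. No issues.
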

\begin{proof}
    Compute
    \begin{align*}
        [T^\perp] & = \{w \in \Sigma^\omega \mid \forall n \in \omega, w_{\mid n} \in T^\perp\}    \\
                  & = \{w \in \Sigma^\omega \mid \forall n \in \omega, w_{\mid n}^\perp \in T\}    \\
                  & = \{w^\perp \mid w \in \Sigma^\omega, \forall n \in \omega, w_{\mid n} \in T\} \\
                  & = [T]^\perp
    \end{align*}
\end{proof}

\begin{proposition}
    Taking infinite branches commutes with taking arbitrary intersections: $[\bigcap_{i \in I} T_i] = \bigcap_{i \in I}[T_i]$.
\end{proposition}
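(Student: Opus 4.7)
The statement is a direct quantifier-swap, so the plan is simply to unfold the definition of $[-]$ on both sides and commute two nested universal quantifiers over $n \in \omega$ and $i \in I$.

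First I would check that the left-hand side is well-defined, i.e., that $\bigcap_{i \in I} T_i$ is a tree. This is immediate because an arbitrary intersection of prefix-closed subsets of $\Sigma^*$ is prefix-closed: if every prefix of $w$ lies in each $T_i$, then every prefix of $w$ lies in the intersection. In the degenerate case $I = \emptyset$, the intersection is $\Sigma^*$ (a tree) and both sides reduce to $\Sigma^\omega$, so nothing special needs to be said.

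Then the computation is a single chain of equivalences. For any $w \in \Sigma^\omega$, starting from the definition,
\begin{align*}
w \in \Bigl[\bigcap_{i \in I} T_i\Bigr]
  &\iff \forall n \in \omega, \; w_{\mid n} \in \bigcap_{i \in I} T_i \\
  &\iff \forall n \in \omega, \; \forall i \in I, \; w_{\mid n} \in T_i \\
  &\iff \forall i \in I, \; \forall n \in \omega, \; w_{\mid n} \in T_i \\
  &\iff \forall i \in I, \; w \in [T_i] \\
  &\iff w \in \bigcap_{i \in I}[T_i].
\end{align*}
Reading the first and last lines off gives the desired equality of sets.

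There is no real obstacle here: everything is pure logic on the definition. The one thing worth flagging is just the well-definedness check for the intersection being a tree, which is why I would mention it explicitly before doing the quantifier swap.
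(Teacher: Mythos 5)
Your proof is correct and follows exactly the same quantifier-swap argument that the paper uses; the extra check that $\bigcap_{i \in I} T_i$ is a tree is a sensible addition but not a substantive difference.
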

\begin{proof}
    For any $w \in \Sigma^\omega$, $\forall n . \forall i . w_{\mid n} \in T_i \iff \forall i. \forall n . w_{\mid n} \in T_i$.
\end{proof}

\paragraph{Concatenation and Infinite Branches.}

\begin{proposition}
    For any $T \in T(\Sigma)$, $u \in \Sigma^*$, $a \in \Sigma$,
    \begin{enumerate}
        \item $u[T] = [\Pref(u) \cup uT]$
        \item $a[T] = [\{\varepsilon\} \cup aT]$
        \item $u^{-1}[T] = [u^{-1}T]$
    \end{enumerate}
\end{proposition}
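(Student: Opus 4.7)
The plan is to prove (1) by double inclusion, then deduce (2) as a direct special case, and finally establish (3) by unfolding the definitions and exploiting the prefix-closure of $T$.

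For (1), I would first verify that $\Pref(u) \cup uT$ is prefix-closed: prefixes of elements of $uT$ either have length at most $|u|$ (landing in $\Pref(u)$) or factor as $uv'$ for a prefix $v'$ of an element of $T$, which stays in $uT$ by prefix-closure of $T$. The inclusion $u[T] \subseteq [\Pref(u) \cup uT]$ is then immediate: given $v = uw$ with $w \in [T]$, prefixes of $v$ of length at most $|u|$ lie in $\Pref(u)$, and longer prefixes are of the form $u w_{\mid n} \in uT$. The reverse inclusion uses that every word in $\Pref(u)$ has length at most $|u|$, so any prefix of $v \in [\Pref(u) \cup uT]$ of length greater than $|u|$ must sit in $uT$; this forces $v$ to start with $u$, and writing $v = uw$, left-cancellation in the free monoid gives $w_{\mid n} \in T$ for every $n$, hence $w \in [T]$.

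Statement (2) is the specialisation of (1) to a single letter. One has $\Pref(a) = \{\varepsilon, a\}$, and whenever $T \neq \emptyset$ prefix-closure of $T$ gives $\varepsilon \in T$, so $a = a\varepsilon \in aT$ and the extra copy of $a$ is absorbed into $aT$. I would treat $T = \emptyset$ separately: both sides collapse to $\emptyset$ since any infinite word has a prefix of length $1$, which cannot lie in $\{\varepsilon\}$.

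For (3), I would first observe that $u^{-1}T$ is itself a tree by a one-line prefix-pulling argument: if $uv \in T$ and $v'$ is a prefix of $v$, then $uv'$ is a prefix of $uv$, hence in $T$. Then I unfold both sides: $v \in u^{-1}[T]$ iff $v \in \Sigma^\omega$ and $\Pref(uv) = \Pref(u) \cup u\Pref(v) \subseteq T$, whereas $v \in [u^{-1}T]$ iff $v \in \Sigma^\omega$ and $u \Pref(v) \subseteq T$. The mildly subtle point --- the only real observation in the whole proposition --- is that the weaker condition $u\Pref(v) \subseteq T$ already implies $\Pref(u) \subseteq T$: taking $\varepsilon \in \Pref(v)$ yields $u \in T$, and then prefix-closure of $T$ finishes the job. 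Everything else is bookkeeping.
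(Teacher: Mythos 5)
Your proof is correct and follows essentially the same route as the paper's: double inclusion for (1) after checking $\Pref(u)\cup uT$ is a tree, specialisation for (2), and for (3) the reduction of $\Pref(uv)\subseteq T$ to $u\Pref(v)\subseteq T$ via prefix-closure of $T$. Your treatment of (2) is in fact slightly more careful than the paper's bare ``special case'' remark, since you explicitly account for the discrepancy between $\Pref(a)=\{\varepsilon,a\}$ and $\{\varepsilon\}$ when $T=\emptyset$.
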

\begin{proof}
    Let us prove the statements.
    \begin{enumerate}
        \item First check that $\Pref(u) \cup uT$ is a tree. We can assume $T$ is non-empty. Then $\Pref(uT) = \Pref(u) \cup u\Pref(T) = \Pref(u) \cup uT$ because $T$ is a tree, hence $\Pref(u) \cup uT$ is a tree. Now let us prove the wanted equality by double inclusion. Let $uw \in u[T]$, where $w \in [T]$. Then prefixes of $uw$ are, depending of their length, either in $\Pref(u)$, or in $uT$, so that $uw \in [\Pref(u) \cup uT]$. Conversely, let $w \in [\Pref(u) \cup uT]$. Let $n = |u|$. As $w_{\mid n} \in \Pref(u) \cup uT$ and the only element of length $n$ of that set is $u$ itself, we have $w_{\mid n} = u$. Furthermore, for length reasons $w_{\mid n+m} \in uT$ for every $m \in \omega$, therefore $w^{\mid n} \in [T]$. Whence $w = w_{\mid n} w^{\mid n} = uw^{\mid n} \in u[T]$.
        \item This is a special case of the previous point.
        \item First check that $u^{-1}T$ is a tree. We have
              \begin{align*}
                  \Pref(u^{-1}T) & = \{w_{\mid n} \mid w \in u^{-1}T \text{ and } n \in \omega\} \\
                                 & = \{w_{\mid n} \mid uw \in T \text{ and } n \in \omega\}      \\
                                 & \subseteq \{w \mid uw \in \Pref(T)\}                          \\
                                 & = u^{-1}\Pref(T) = u^{-1}T
              \end{align*}
              Now let us prove the wanted equality by double inclusion. We have
              \begin{align*}
                   & u^{-1}[T] = \{w \mid uw \in [T]\} = \{w \mid \Pref(uw) \subseteq T\}                 \\
                   & [u^{-1}T] = \{w \mid \Pref(w) \subseteq u^{-1}T\} = \{w \mid u\Pref(w) \subseteq T\}
              \end{align*}
              As $\Pref(uw) = \Pref(u) \cup u\Pref(w)$ and $T$ is prefix-closed, $\Pref(uw) \subseteq T$ iff $u\Pref(w) \subseteq T$, whence the result.
    \end{enumerate}
\end{proof}

Note that, given $u \in \Sigma^*$, the map $u^{-1}(\cdot) : T(\Sigma) \to T(\Sigma)$ recovers the sub-tree rooted at the node $u$. In particular
\begin{itemize}
    \item The tree $u^{-1}T$ is the sub-tree rooted at $u$ extracted from $T$.
    \item The language (usually not tree) $uu^{-1}T$ is that sub-tree viewed in its place inside $T$.
    \item The tree $\Pref(u) \cup uu^{-1}T$ is the sub-tree of $T$ given by keeping only the finite branch $u$ and the sub-tree rooted at $u$.
\end{itemize}

\paragraph{Graphical Representations.}
Any tree $T$ has a graphical representation given by drawing the relation $\{(u,ua) \mid u \in \Sigma^*, a \in \Sigma, ua \in T\}$. Unless $T$ is empty, this graph has a root $\varepsilon$ and branches that follow the spelling of words in $T$.

\begin{example}[The hat]
    The hat is the binary tree $H = \Pref(\{a^\omega,b^\omega\}) = \{a^n \mid n \in \omega\} \cup \{b^n \mid n \in \omega\}$. It has exactly two infinite branches, a left one and a right one, as $[H] = \{a^\omega,b^\omega\}$.
    \[
        \tikzfig{hat_tree}
    \]
\end{example}

\begin{example}[The $n$-branches] \label{ex:nbranches}
    We define inductively a family $\{B_n \mid n \in \omega\}$ of binary pruned trees.
    \begin{align*}
         & B_0 = \emptyset                                                        \\
         & B_{n+1} = \bigcup_{k \in \omega} a^k(\{\varepsilon\} \cup b B^\perp_n)
    \end{align*}
    Explicitly, we have $B_1 = \{a^k \mid k \in \omega\}$, $B_2 = \{a^k b^{k'} \mid k, k' \in \omega\}$ and more generally $B_n = \{a^{k_1} b^{k_2} \ldots (b^{\perp^n})^{k_n} \mid k_1, \ldots, k_n \in \omega\}$.
    These trees have the following infinite branches.
    \begin{align*}
         & [B_0] = \emptyset                                                \\
         & [B_1] = \{a^\omega\}                                             \\
         & [B_{n+1}] = \bigcup_{\alpha \in \omega + 1} a^\alpha [B_n]^\perp
    \end{align*}
    Explicitly, $[B_n] = \{a^{\alpha_1} b^{\alpha_2} \ldots (b^{\perp^{n-1}})^{\alpha_{n-1}} (b^{\perp^n})^{\omega} \mid \alpha_1, \ldots, \alpha_{n-1} \in \omega + 1\}$.
    \[ \tikzfig{branching1_tree} \]
    \[ \tikzfig{branching2_tree} \]
\end{example}

\begin{example}[The growing tree] \label{ex:growing}
    The \emph{growing tree} $G$ is defined as
    \begin{equation*}
        G = \bigcup_{k \in \omega} a^k (\{\varepsilon\} \cup b B^\perp_{k+1})
    \end{equation*}
    It consists in plugging all of the non-trivial $n$-branches into a lefmost infinite branch. Its infinite branches are
    \begin{equation*}
        [G] = \{a^k b^{\alpha_1} a^{\alpha_2} \ldots (a^{\perp^k})^{\alpha_k} (a^{\perp^{k+1}})^\omega \mid k \in \omega, \alpha_1, \ldots, \alpha_k \in \omega +1\}
    \end{equation*}
    \[ \tikzfig{growing_tree} \]
\end{example}

\subsection{Construction and Destruction of Trees}


There are two remarkable viewpoints on a tree: lying down and staring at what is around the root, or climbing along a branch.

\subsubsection{Around the Root}

Destructively speaking, a non-empty tree is the union of the root and its children subtrees.
\begin{proposition}
    For all $T \in T(\Sigma) \setminus \{\emptyset\}$, $T = \{\varepsilon\} \cup \biguplus_{a \in \Sigma} a a^{-1} T$.
\end{proposition}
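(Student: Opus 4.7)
The plan is to prove the two inclusions separately, after first observing that the right-hand union is actually disjoint, which justifies the notation $\biguplus$. Disjointness is immediate: any word in $aa^{-1}T$ begins with the letter $a$ (since $aa^{-1}T = T \cap a\Sigma^\infty$ by the third item of Proposition 1.1), so two distinct letters $a \neq a'$ in $\Sigma$ give $aa^{-1}T \cap a'a'^{-1}T \subseteq a\Sigma^\infty \cap a'\Sigma^\infty = \emptyset$. The empty word $\varepsilon$ is also disjoint from each $aa^{-1}T$ since $\varepsilon \notin a\Sigma^\infty$.

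For the inclusion $\{\varepsilon\} \cup \biguplus_{a \in \Sigma} aa^{-1}T \subseteq T$, I would first note that $T$ is non-empty and prefix-closed, so it contains $\Pref(w)$ for any $w \in T$, and in particular $\varepsilon \in T$. Each $aa^{-1}T = T \cap a\Sigma^\infty$ is by construction a subset of $T$, giving the inclusion.

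For the reverse inclusion $T \subseteq \{\varepsilon\} \cup \biguplus_{a \in \Sigma} aa^{-1}T$, I would take any $w \in T$ and split on $|w|$. If $|w| = 0$, then $w = \varepsilon$ and we land in the first component. Otherwise $w \in \Sigma^*$ has length at least one, so it factors uniquely as $w = aw'$ for some $a \in \Sigma$ and some $w' \in \Sigma^*$; since $aw' = w \in T$, we have $w' \in a^{-1}T$ by definition, and therefore $w = aw' \in aa^{-1}T$.

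There is no real obstacle here; the argument is essentially a repackaging of the identity $aa^{-1}T = T \cap a\Sigma^\infty$ from Proposition 1.1 together with the elementary fact that every non-empty word has a unique first letter. The only minor point worth being careful about is justifying $\varepsilon \in T$ from prefix-closure and non-emptiness, which I would handle explicitly as above.
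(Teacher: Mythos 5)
Your proof is correct and takes essentially the same route as the paper, which simply writes $T = \{\varepsilon\} \cup \biguplus_{a \in \Sigma} (T \cap a\Sigma^*)$ and invokes $aa^{-1}T = T \cap a\Sigma^\infty$; you have just unpacked that one-line partition argument into explicit disjointness and two inclusions. No gaps.
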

\begin{proof}
    We have $T = \{\varepsilon\} \cup \biguplus_{a \in \Sigma} (T \cap a\Sigma^*) = \{\varepsilon\} \cup \biguplus_{a \in \Sigma} aa^{-1} T$.
\end{proof}

Constructively speaking, given a $\Sigma$-indexed family of trees $(T_a)_{a \in \Sigma}$, one can define
\[ T = \{\varepsilon\} \cup \biguplus_{a \in \Sigma} aT_a \]

This yields a tree whose infinite branches are obtained from those of the children subtrees.

\begin{proposition} \label{prop:root_construct}
    If $(T_a)_{a \in \Sigma} \in T(\Sigma)$ is a family of trees, then
    \begin{enumerate}
        \item $\{\varepsilon\} \cup \biguplus_{a \in \Sigma} aT_a \in T(\Sigma)$
        \item $[\{\varepsilon\} \cup \biguplus_{a \in \Sigma} aT_a] = \bigcup_{a \in \Sigma} a[T_a]$
    \end{enumerate}
\end{proposition}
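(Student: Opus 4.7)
The plan is to handle the two items separately. For (1), I will compute $\Pref$ of the candidate set and verify it is contained in itself, using the identity $\Pref(LM) = \Pref(L) \cup L\Pref(M)$ from the prefixing proposition. For (2), I will exploit the already-established equality $a[T] = [\{\varepsilon\} \cup aT]$, together with monotonicity of $[\cdot]$ for one inclusion and the disjointness of the union $\biguplus_a aT_a$ for the other.

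For (1), first I distribute $\Pref$ over the union to reduce the problem to analyzing each $\Pref(aT_a)$ separately. When $T_a \neq \emptyset$, the prefixing proposition gives $\Pref(aT_a) = \Pref(a) \cup a\Pref(T_a) = \{\varepsilon, a\} \cup aT_a$; when $T_a = \emptyset$, $aT_a$ is itself empty and there is nothing to check. The only potentially new element on the right is the bare letter $a$, but any non-empty tree contains $\varepsilon$, so $a = a\varepsilon \in aT_a$, and nothing escapes $\{\varepsilon\} \cup \biguplus_b bT_b$.

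For (2), the $\supseteq$ inclusion is immediate: each $a[T_a]$ equals $[\{\varepsilon\} \cup aT_a]$ by the earlier proposition, and this is included in $[\{\varepsilon\} \cup \biguplus_b bT_b]$ because $[\cdot]$ is monotone with respect to set inclusion. For $\subseteq$, I will take $w \in [\{\varepsilon\} \cup \biguplus_a aT_a]$ and set $b = w_1$, which exists because $w$ is infinite. For every $n \geq 1$ the prefix $w_{\mid n}$ has positive length, hence lies in the disjoint union $\biguplus_a aT_a$; since it starts with $b$ it must belong to the $a = b$ summand, forcing $(w^{\mid 1})_{\mid n-1} \in T_b$. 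Therefore $w^{\mid 1} \in [T_b]$ and $w = b\, w^{\mid 1} \in b[T_b]$.

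I do not expect a real obstacle: the whole argument reduces to prior propositions and careful bookkeeping. The only mildly delicate point is the empty-subtree boundary case in~(1), where the side condition $M \neq \emptyset$ of the prefixing formula would fail; but this case is vacuous because $aT_a$ is itself empty. In (2), the disjointness encoded in $\biguplus$ is what lets me pin down the first letter $b$ of $w$ as the index of the summand, which is the one substantive use of that hypothesis.
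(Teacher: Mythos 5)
Your proof is correct and follows essentially the same route as the paper: the substantive step in both is the $\subseteq$ inclusion of item~(2), where one pins down the first letter $w_1$ and shows the tail $w^{\mid 1}$ is an infinite branch of $T_{w_1}$. The only cosmetic difference is that you route item~(1) and the easy inclusion of item~(2) through the earlier propositions ($\Pref(LM) = \Pref(L) \cup L\Pref(M)$ and $a[T] = [\{\varepsilon\} \cup aT]$, plus monotonicity of $[\cdot]$), whereas the paper verifies these directly by element chasing; both are sound, and your handling of the empty-subtree boundary case is careful and correct.
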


\begin{proof}
    Let $T$ be the set $\{\varepsilon\} \cup \biguplus_{a \in \Sigma} aT_a$.
    \begin{enumerate}
        \item Let us show that $T$ is prefix-closed. Let $u \in T$, the case $u = \varepsilon$ is obvious so we can assume $u = av$ for some $a \in \Sigma$ and some $v \in T_a$. As $T_a$ is a tree, $\Pref(v) \subseteq T_a$ hence $a\Pref(v) \subseteq aT_a \subseteq T$. Therefore $\Pref(u) = \{\varepsilon\} \cup a\Pref(v) \subseteq T$, so that $T$ is prefix-closed.
        \item Let $w \in \Sigma^\omega$. Assume $w \in [T]$. We have for every $n \in \omega$, $w_{\mid n} \in \{\varepsilon\} \cup \biguplus_{a \in \Sigma} aT_a$. If $n$ is non-zero, the only possibility is that $w_{\mid n} \in w_1 T_{w_1}$. Therefore, every prefix of $w^{\mid 1}$ is in $T_{w_1}$, whence $w^{\mid 1} \in [T_{w_1}]$ so that $w \in w_1[T_{w_1}] \subseteq \bigcup_{a \in \Sigma} a[T_a]$. Conversely, let $w \in \bigcup_{a \in \Sigma} a[T_a]$, then there is $a \in \Sigma$ such that $w \in a[T_a]$. For every $n \in \omega$, either $n = 0$ and $w_{\mid 0} = \varepsilon \in T$, or $n > 0$ and $w_{\mid n} = au$ for some $u \in T_a$, therefore $w_{\mid n} \in aT_a \subseteq T$. This shows that $w \in [T]$.
    \end{enumerate}
\end{proof}

And as expected, we then have
\begin{proposition}
    If $T = \{\varepsilon\} \cup \biguplus_{a \in \Sigma} aT_a$ and $a \in \Sigma$, then $T_a = a^{-1}T$.
\end{proposition}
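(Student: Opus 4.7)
The plan is to unfold the definition of $a^{-1}T$ and show double inclusion, with the forward direction being trivial and the reverse direction relying on the disjointness of the union in the construction of $T$.

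First I would recall that by definition $a^{-1}T = \{v \in \Sigma^\infty \mid av \in T\}$, and observe that since $T \subseteq \Sigma^*$, any such $v$ is automatically in $\Sigma^*$. For the inclusion $T_a \subseteq a^{-1}T$, I would simply note that if $v \in T_a$, then $av \in aT_a \subseteq T$, so $v \in a^{-1}T$.

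The reverse inclusion $a^{-1}T \subseteq T_a$ is where the content of the statement lies. Given $v \in a^{-1}T$, we have $av \in T = \{\varepsilon\} \cup \biguplus_{b \in \Sigma} bT_b$. Since $av$ has length at least $1$, it is not $\varepsilon$, so it must belong to $bT_b$ for some $b \in \Sigma$. The key observation is that every element of $bT_b$ begins with the letter $b$, and $av$ begins with $a$: since the union is disjoint and letters are uniquely determined by position, this forces $b = a$ and therefore $v \in T_a$.

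The only step requiring care is making explicit why $b = a$ is forced, i.e., why the sets $bT_b$ for distinct $b$ are pairwise disjoint; this follows from the fact that any word in $bT_b$ has $b$ as its first letter. Once this is noted, the proof is immediate and no further computation is needed.
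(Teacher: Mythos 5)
Your proof is correct and is essentially the paper's argument unpacked elementwise: the paper obtains the same conclusion as a one-line computation, distributing $a^{-1}$ over the union and invoking the identities $a^{-1}aL = L$ and $a^{-1}bL = \emptyset$ for $b \neq a$, which encode exactly your first-letter disjointness observation. No gap; the two proofs differ only in whether the key fact is cited as an algebraic identity or re-derived on the spot.
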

\begin{proof} As $a^{-1}a L = L$ and $a^{-1}b L = \emptyset$ for $b \neq a$, we have
    \[ a^{-1}T = a^{-1}\left(\{\varepsilon\} \cup \biguplus_{b \in \Sigma} bT_b \right) = a^{-1}\{\varepsilon\} \cup \biguplus_{b \in \Sigma} a^{-1}bT_b = T_a  \]
\end{proof}

In particular, for any non-empty tree $T \in T(\Sigma)$ we have
\begin{align*}
     & T = \{\varepsilon\} \uplus \biguplus_{a \in \Sigma} aa^{-1}T \\
     & [T] = \biguplus_{a \in \Sigma} aa^{-1}[T]
\end{align*}

\subsubsection{Along a Branch}

Destructively speaking, a tree with at least one infinite branch is the union of the subtrees plugged along that branch.

Given a infinite word $w \in \Sigma^\omega$, the set of \emph{off-words} of $w$ is defined as $w^\circ = \{w_{\mid n} a \mid n \in \omega, a \in \Sigma \setminus \{w_{n+1}\}\} \in P(\Sigma^*)$.

\begin{proposition} \label{prop:destruct_stream}
    For all $T \in T(\Sigma)$ and $w \in [T]$,
    \begin{align*} T & = \Pref(w) \cup \biguplus_{u \in w^\circ} uu^{-1}T                                                                                            \\
                 & = \biguplus_{n \in \omega} w_{\mid n} \left( \{\varepsilon\} \cup \biguplus_{a \in \Sigma \setminus \{w_{n+1}\}} a(w_{\mid n}a)^{-1} T\right)
    \end{align*}
\end{proposition}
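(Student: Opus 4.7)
The plan is to prove the first equality by double inclusion plus disjointness, then derive the second equality by a purely syntactic reindexing.

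For the inclusion $\Pref(w) \cup \biguplus_{u \in w^\circ} uu^{-1}T \subseteq T$, I would use that $w \in [T]$ gives $\Pref(w) \subseteq T$, and that by the first proposition of the paper $uu^{-1}T = T \cap u\Sigma^\infty \subseteq T$ for every $u$ (in particular for every off-word). For the reverse inclusion, I would pick $v \in T$ and do a case analysis on whether $v$ agrees with $w$ on its whole length. If $v$ is a prefix of $w$, then $v \in \Pref(w)$. Otherwise, let $n$ be the least index such that $v_{n+1} \neq w_{n+1}$ (it exists because $v$ is finite and disagrees with $w$ somewhere). Set $a = v_{n+1}$ and $u = w_{\mid n} a$. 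Then $u \in w^\circ$ by definition, and $v = uv'$ for some $v' \in \Sigma^*$; since $v \in T$, we have $v' \in u^{-1}T$, hence $v = uv' \in uu^{-1}T$.

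Next I would check that the union on the right-hand side is disjoint, which is the step deserving the most care. A point of $\Pref(w)$ is some $w_{\mid m}$, and if it also lies in $uu^{-1}T \subseteq u\Sigma^*$ with $u = w_{\mid n}a$ and $a \neq w_{n+1}$, then its $(n{+}1)$-th letter would have to be simultaneously $a$ and $w_{n+1}$, contradiction. For two distinct off-words $u = w_{\mid n}a$ and $u' = w_{\mid n'}a'$, assume without loss of generality $n \leq n'$: if $n = n'$ then $a \neq a'$ forces $u\Sigma^* \cap u'\Sigma^* = \emptyset$; if $n < n'$, any common element would start with $u$ (so its $(n{+}1)$-th letter is $a \neq w_{n+1}$) and also with $u'$ (so its $(n{+}1)$-th letter is $w_{n+1}$ since $n+1 \leq n'$), contradiction. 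This settles the first displayed equality.

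The second equality is then a reindexing: the off-words are parametrised by pairs $(n,a)$ with $n \in \omega$ and $a \in \Sigma \setminus \{w_{n+1}\}$, so
\[ \biguplus_{u \in w^\circ} uu^{-1}T = \biguplus_{n \in \omega} \biguplus_{a \in \Sigma \setminus \{w_{n+1}\}} w_{\mid n}a(w_{\mid n}a)^{-1}T = \biguplus_{n \in \omega} w_{\mid n}\biguplus_{a \in \Sigma \setminus \{w_{n+1}\}} a(w_{\mid n}a)^{-1}T, \]
using left-distributivity of concatenation over unions. Noting $\Pref(w) = \biguplus_{n \in \omega}\{w_{\mid n}\}$ (disjoint because the prefixes have distinct lengths) and factoring $w_{\mid n}$ on the left combines both contributions into the claimed form. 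The main obstacle in this whole proof is just bookkeeping the disjointness of the $uu^{-1}T$ cleanly; the structural content is entirely captured by the ``first point of divergence from $w$'' argument.
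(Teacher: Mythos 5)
Your proof is correct and takes essentially the same route as the paper: the paper simply asserts that $\{\Pref(w)\}\cup\{u\Sigma^*\mid u\in w^\circ\}$ partitions $\Sigma^*$ and intersects with $T$, while your double inclusion (via the first point of divergence from $w$) and disjointness check are precisely the verification of that partition claim, followed by the same reindexing for the second display.
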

\begin{proof}
    The set $\{\Pref(w)\} \cup \{u\Sigma^* \mid u \in w^\circ\}$ is clearly a partition of $\Sigma^*$. Therefore,
    \begin{align*}
        T = T \cap \Sigma^*
         & = (T \cap \Pref(w)) \cup \biguplus_{u \in w^\circ} (T \cap u\Sigma^*)                                                                         \\
         & = \Pref(w) \cup \biguplus_{u \in w^\circ} uu^{-1}T                                                                                            \\
         & = \biguplus_{n \in \omega} \left(\{w_{\mid n}\} \cup \biguplus_{u \in w^\circ \cap \Sigma^{n+1}} uu^{-1}T\right)                              \\
         & = \biguplus_{n \in \omega} \left(\{w_{\mid n}\} \cup \biguplus_{a \in \Sigma} w_{\mid n} a (w_{\mid n} a)^{-1}T\right)                        \\
         & = \biguplus_{n \in \omega} w_{\mid n} \left( \{\varepsilon\} \cup \biguplus_{a \in \Sigma \setminus \{w_{n+1}\}} a(w_{\mid n}a)^{-1} T\right)
    \end{align*}
\end{proof}

Constructively speaking, given a countable family of non-empty $\Sigma$-trees $(T_n)_{n\in \omega}$ and an infinite word $w \in \Sigma^\omega$, one can define

\begin{equation*}
    T = \bigcup_{n\in \omega} w_{\mid n} T_n
\end{equation*}

This yields a tree whose infinite branches are the main branch $w$ and the ones obtained from the subtrees $T_n$.

\begin{proposition} \label{prop:branch_construct}
    If $w \in \Sigma^\omega$ and $(T_n)_{n\in \omega} \in T(\Sigma)$ is a countable family of non-empty trees, then
    \begin{enumerate}
        \item $\bigcup_{n \in \omega} w_{\mid n} T_n \in T(\Sigma)$
        \item $\left[ \bigcup_{n \in \omega} w_{\mid n} T_n \right] = \{w\} \cup \bigcup_{n \in \omega} w_{\mid n} [T_n]$
    \end{enumerate}
\end{proposition}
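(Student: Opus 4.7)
The plan is to prove the two items in order. For item 1, take any $u \in T$ and write $u = w_{\mid n} v$ for some $n \in \omega$ and $v \in T_n$. Since each $T_k$ is non-empty and prefix-closed, $\varepsilon \in T_k$ for every $k$. Any prefix of $u$ is then either a $w_{\mid m}$ with $m \le n$, which lies in $w_{\mid m} T_m \subseteq T$ via $\varepsilon \in T_m$, or of the form $w_{\mid n} v'$ with $v' \in \Pref(v) \subseteq T_n$, which lies in $w_{\mid n} T_n \subseteq T$. Hence $T$ is prefix-closed.

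For item 2, the inclusion $\{w\} \cup \bigcup_{n \in \omega} w_{\mid n} [T_n] \subseteq [T]$ is a routine check on finite prefixes: for $w$ itself, $w_{\mid m} = w_{\mid m} \varepsilon \in w_{\mid m} T_m$ thanks to $\varepsilon \in T_m$; for $w_{\mid n} w'$ with $w' \in [T_n]$, a finite prefix is either a $w_{\mid m}$ with $m \le n$ or of the form $w_{\mid n} w'_{\mid k}$ with $w'_{\mid k} \in T_n$, both in $T$.

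The substantive direction is $[T] \subseteq \{w\} \cup \bigcup_{n \in \omega} w_{\mid n} [T_n]$. Take $w'' \in [T]$ with $w'' \neq w$, and let $N+1$ be the smallest position at which $w$ and $w''$ differ, so that $w_{\mid N} = w''_{\mid N}$ and $w_{N+1} \neq w''_{N+1}$. For every $j > N$, the membership $w''_{\mid j} \in T$ yields an index $k_j$ with $w''_{\mid j} \in w_{\mid k_j} T_{k_j}$. I first observe $k_j \le N$: otherwise $w_{\mid k_j}$ would be a prefix of $w''_{\mid j}$ covering position $N+1$, forcing $w''_{N+1} = w_{N+1}$. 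Since each $k_j$ belongs to the finite set $\{0, 1, \ldots, N\}$, pigeonhole produces a single $k^*$ realized for infinitely many $j$. For those $j$, $(w''_{\mid j})^{\mid k^*} \in T_{k^*}$, and these words are prefixes of $(w'')^{\mid k^*}$ of unbounded length, so prefix-closure of $T_{k^*}$ forces every prefix of $(w'')^{\mid k^*}$ into $T_{k^*}$, giving $(w'')^{\mid k^*} \in [T_{k^*}]$. Because $k^* \le N$, $w_{\mid k^*}$ is a prefix of $w''$, whence $w'' = w_{\mid k^*}(w'')^{\mid k^*} \in w_{\mid k^*}[T_{k^*}]$.

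The main obstacle is precisely this last step: the witnessing $k_j$ may a priori depend on $j$, and there is no obvious uniform $k$. Bounding $k_j \le N$ confines all witnesses to a finite range, after which pigeonhole extracts a uniform $k^*$ and prefix-closure of $T_{k^*}$ closes the argument.
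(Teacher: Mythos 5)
Your proof is correct and follows essentially the same route as the paper's: the prefix case analysis for item 1, the routine check for $\supseteq$, and for $\subseteq$ the key observation that all witnessing indices are bounded by the first point of disagreement with $w$, followed by pigeonhole and the fact that a tree containing infinitely many prefixes of an infinite word contains them all. No gaps.
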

\begin{proof}
    In the whole proof, let $U = \bigcup_{n \in \omega} w_{\mid n} T_n$.
    \begin{enumerate}
        \item Let $u \in w_{\mid n} T_n$ for some $n \in \omega$. Then $u = w_{\mid n} v$ for some $v \in T_n$. Prefixes of $u$ are either prefixes of $w$, which belong to $U$ because every $T_i$ contains $\varepsilon$, or of the form $w_{\mid n} v'$ for some $v'$ prefix of $v$. As $v \in T_n$ and $T_n$ is prefix-closed, $v' \in T_n$, hence $w_{\mid n} v' \in w_{\mid n} T_n$ is in $U$ as well.
        \item $\boxed{\supseteq}$ First consider $w$. For all $n \in \omega$, as $\varepsilon \in T_n$ we have $w_{\mid n} \in w_{\mid n} T_n$, therefore $w$ is an infinite branch of $U$. Now let $x \in [T_n]$ for some $n \in \omega$, let us show that $w_{\mid n} x$ is an infinite branch of $U$ as well. Every prefix of $w$ is in $U$, so consider $w_{\mid n} u$ where $u$ is a prefix of $x$. As $x \in [T_n]$ we have $u \in T_n$ therefore $w_{\mid n} u \in U$, which completes the proof.
        \item[] $\boxed{\subseteq}$ Let $b \in \left[ \bigcup_{n \in \omega} w_{\mid n} T_n \right] \setminus \{w\}$. Let $n_0 \in \omega$ be such that $b_{\mid n_0} \neq w_{\mid n_0}$ and minimal with that property. We know that for all $m \in \omega$, $b_{\mid m} \in \bigcup_{n \in \omega} w_{\mid n} T_n$, henceforth for all $m \geq n_0$, $b_{\mid m} \in \bigcup_{n < n_0} w_{\mid n} T_n$. Pick $n_1 < n_0$ such that there are infinitely many $b_{\mid m}$ ($m \geq n_0$) in $w_{\mid n_1} T_{n_1}$. Therefore, there are infinitely many prefixes of $b^{\mid n_1}$ in $T_{n_1}$. As $T_{n_1}$ is prefix-closed, this yields $b^{\mid n_1} \in [T_{n_1}]$. Hence $b = b_{\mid n_1} b^{\mid n_1} = w_{\mid n_1} b^{\mid n_1} \in w_{\mid n_1} [T_{n_1}]$, which completes the proof.
    \end{enumerate}
\end{proof}


The aforementioned way to construct trees along an infinite branch is not very handy, because the union may not be disjoint as trees $T_n$ can 'overflow' along the branch $w$. It is more convenient to ask for a family $(T_u)_{u \in w^\circ}$ parametrised by the off-words of $w$, following the destructive vision.

\begin{proposition}
    If $w \in \Sigma^\omega$ and $(T_u)_{u \in w^\circ}$ is a family of trees, let
    \[ T = \Pref(w) \cup \biguplus_{u \in w^\circ} uT_u = \biguplus_{n \in \omega} w_{\mid n} \left( \{\varepsilon\} \cup \biguplus_{a \in \Sigma \setminus \{w_{n+1}\}} aT_{w_{\mid n}a}\right) \]
    Then
    \begin{enumerate}
        \item $T \in T(\Sigma)$
        \item $[T] = \{w\} \cup \biguplus_{u \in w^\circ} u[T_u]$
    \end{enumerate}
\end{proposition}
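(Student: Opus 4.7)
The plan is to reduce this proposition to the two preceding constructive propositions, \ref{prop:root_construct} and \ref{prop:branch_construct}, so that only bookkeeping about off-words and disjointness remains. First, for each $n \in \omega$, I would define the $\Sigma$-indexed family $(S_{n,a})_{a \in \Sigma}$ by $S_{n,a} = T_{w_{\mid n}a}$ when $a \neq w_{n+1}$ and $S_{n,w_{n+1}} = \emptyset$, and then set
\[ T'_n = \{\varepsilon\} \cup \biguplus_{a \in \Sigma} a S_{n,a} = \{\varepsilon\} \cup \biguplus_{a \in \Sigma \setminus \{w_{n+1}\}} a T_{w_{\mid n}a}. \]
Proposition \ref{prop:root_construct} immediately gives that each $T'_n$ is a tree with $\varepsilon \in T'_n$, and that
\[ [T'_n] = \bigcup_{a \in \Sigma \setminus \{w_{n+1}\}} a[T_{w_{\mid n}a}]. \]

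Next, I would check the identity $T = \bigcup_{n \in \omega} w_{\mid n} T'_n$ by direct expansion: distributing $w_{\mid n}$ into $T'_n$ yields $\{w_{\mid n}\}$ together with the terms $w_{\mid n} a T_{w_{\mid n}a}$ for $a \neq w_{n+1}$. Taking the union over $n$, the first pieces assemble into $\Pref(w)$ and the second pieces are exactly indexed by $w^\circ$, recovering the right-hand side of the given formula for $T$. Then Proposition \ref{prop:branch_construct} applies (since each $T'_n$ is non-empty) and yields that $T$ is a tree with
\[ [T] = \{w\} \cup \bigcup_{n \in \omega} w_{\mid n} [T'_n] = \{w\} \cup \bigcup_{u \in w^\circ} u[T_u]. \]

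The remaining task is to justify that both unions are disjoint, i.e., to upgrade $\cup$ to $\biguplus$. For the union in $T$ and in $[T]$, take distinct off-words $u = w_{\mid n} a$ and $u' = w_{\mid n'} a'$; if $n = n'$ then $a \neq a'$, and if $n < n'$ then $u$ has $a \neq w_{n+1}$ at position $n+1$ while $u'$ has $w_{n+1}$ there. In both cases $u$ and $u'$ disagree at a common position, so $u\Sigma^\infty \cap u'\Sigma^\infty = \emptyset$ and in particular $uT_u \cap u'T_{u'} = \emptyset$ and $u[T_u] \cap u'[T_{u'}] = \emptyset$. Likewise $\Pref(w) \cap uT_u = \emptyset$ because every element of $\Pref(w)$ agrees with $w$ along its whole length, whereas elements of $uT_u$ diverge from $w$ at position $|u|$; and $w \notin u[T_u]$ for the same reason.

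The only mild obstacle is this disjointness bookkeeping, which is just careful unwinding of the definition of $w^\circ$; everything else is mechanical from the two constructive propositions already established.
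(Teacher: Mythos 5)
Your proposal is correct and follows essentially the same route as the paper: define the auxiliary trees $T_n = \{\varepsilon\} \cup \biguplus_{a \in \Sigma \setminus \{w_{n+1}\}} aT_{w_{\mid n}a}$, then chain Propositions~\ref{prop:root_construct} and~\ref{prop:branch_construct}. Your explicit verification that the unions are disjoint (via off-words disagreeing at a common position) is a detail the paper leaves implicit, and is a welcome addition rather than a deviation.
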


\begin{proof} Let be the non-empty trees $T_n = \{\varepsilon\} \cup \biguplus_{a \in \Sigma \setminus \{w_{n+1}\}} aT_{w_{\mid n} a}$ for every $n \in \omega$, so that $T = \bigcup_{n \in \omega} w_{\mid n} T_n$.
    \begin{enumerate}
        \item The $T_n$ are trees according to Proposition~\ref{prop:root_construct}. Therefore, using Proposition~\ref{prop:branch_construct}, $T = \bigcup_{n \in \omega} w_{\mid n} T_n$ is a tree.
        \item Proposition~\ref{prop:root_construct} yields $[T_n] = \bigcup_{a \in \Sigma \setminus \{w_{n+1}\}} a[T_{w_{\mid n}a}]$. Proposition~\ref{prop:branch_construct} yields $[T] = \left[ \bigcup_{n \in \omega} w_{\mid n} T_n \right] = \{w\} \cup \bigcup_{n \in \omega} w_{\mid n} [T_n]$. Combining the two results,
              \[ [T] = \{w\} \cup \bigcup_{n \in \omega} \bigcup_{a \in \Sigma \setminus \{w_{n+1}\}} w_{\mid n} a [T_{w_{\mid n}a}] = \{w\} \cup \biguplus_{u \in w^\circ} u[T_u] \]
    \end{enumerate}
\end{proof}

And as expected, we then have
\begin{proposition}
    If $T = \Pref(w) \cup \biguplus_{u \in w^\circ} uT_u$ and $u \in w^\circ$, then $T_u = u^{-1}T$.
\end{proposition}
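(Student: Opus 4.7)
The plan is to compute $u^{-1}T$ by pushing $u^{-1}(\cdot)$ inside the disjoint union defining $T$ (using that the contravariant powerset preserves arbitrary unions) and showing that all but one term collapse. Writing $u = w_{\mid n} a$ with $a \in \Sigma \setminus \{w_{n+1}\}$, one has
\[ u^{-1}T \;=\; u^{-1}\Pref(w) \;\cup\; \bigcup_{u' \in w^\circ} u^{-1}(u' T_{u'}). \]
First I would dispatch $u^{-1}\Pref(w) = \emptyset$, which follows from $u$ being a prefix of no finite prefix of $w$: any $w_{\mid m}$ either is shorter than $u$ (when $m \leq n$) or has $w_{n+1} \neq a$ at position $n+1$.

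The main obstacle is the lemma that \emph{any two distinct off-words of $w$ are incomparable in the prefix order}, which will give $u^{-1}(u' T_{u'}) = \emptyset$ for every $u' \in w^\circ$ with $u' \neq u$. Writing $u' = w_{\mid n'} a'$ with $a' \neq w_{n'+1}$ and assuming without loss of generality $n \leq n'$, I would argue: if $n = n'$ then distinctness forces $a \neq a'$, so $u$ and $u'$ are two words of the same length differing at their last letter; if $n < n'$, then position $n+1$ of $u'$ holds $w_{n+1} \neq a$. In both cases $u \not\sqsubseteq u'$, and since $|u| \leq |u'|$ also $u' \not\sqsubseteq u$. Hence $u\Sigma^\infty \cap u'\Sigma^\infty = \emptyset$, whence $u^{-1}(u' T_{u'}) \subseteq u^{-1}(u' \Sigma^\infty) = \emptyset$.

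Finally, the remaining term $u^{-1}(u T_u)$ equals $T_u$ by item~4 of the first proposition of Section~1, valid since $u \in \Sigma^*$. Combining the three cases collapses the distributed union to $u^{-1}T = T_u$, as desired. The overall structure mirrors the earlier proof that $T_a = a^{-1}T$ for the root construction, with the incomparability lemma replacing the trivial observation $a^{-1}b L = \emptyset$ for $b \neq a$.
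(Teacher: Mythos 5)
Your proof is correct and takes essentially the same route as the paper's: distribute $u^{-1}$ over the union, annihilate the terms $u^{-1}(u'T_{u'})$ for $u' \neq u$ via the pairwise prefix-incomparability of off-words, and keep $u^{-1}(uT_u) = T_u$. You are in fact more explicit than the paper, which states the incomparability only parenthetically and silently omits the check that $u^{-1}\Pref(w) = \emptyset$.
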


\begin{proof}
    As $u^{-1}uL = L$ and $u^{-1}vL = \emptyset$ for $v \in w^\circ \setminus \{u\}$ (because $u$ and $v$ are not a prefix of each other), we have
    \[ u^{-1} T = u^{-1}\left( \Pref(w) \cup \biguplus_{u \in w^\circ} uT_u \right) = u^{-1} \Pref(w) \cup \bigcup_{v \in w^\circ} u^{-1}v T_v = T_u \]
\end{proof}

In particular, for any tree $T \in T(\Sigma)$ and $w \in [T]$ we have
\begin{align*}
     & T = \Pref(w) \uplus \biguplus_{u \in w^\circ} uu^{-1}T  \\
     & [T] = \{w\} \uplus \biguplus_{u \in w^\circ} uu^{-1}[T]
\end{align*}

\section{The Cantor-Bendixson Derivative}

This section introduces the Cantor-Bendixson derivative, an operation on topological spaces which consists in deleting isolated points. We will then see that trees themselves are topological spaces and how derivation works on them.

\subsection{Derivating Topological Spaces}

We assume familiarity with elementary topology and only give some of the basics definitions to fix notation.

A topological space is a set $X$ equipped with a set $\Omega(X) \subseteq P(X)$ such that $\emptyset, X \in \Omega(X)$ and $\Omega(X)$ is closed under arbitrary unions and finite intersections. Elements of $\Omega(X)$ are called the \emph{open sets} of $X$. Their complements $C(X)$ are called \emph{closed sets} of $X$.

Given a subset $Y \subseteq X$ of a topological space $X$, the subset topology is defined as
\[ \Omega(Y) = \{U \cap Y \mid U \in \Omega(X)\} \]

A point $x \in X$ is \emph{isolated} if $\{x\} \in \Omega(X)$.

\begin{definition}[Cantor-Bendixson derivative]
    The \emph{derivative} of a topological space $X$ is the set
    \[ X' = X \setminus \{ x \in X \mid x \text{ is isolated}\} \]
    equipped with the subset topology inherited from $X$.
\end{definition}
As $X \setminus X'$ is the union of its open singletons, the set $X'$ is closed in $X$.

\begin{lemma} \label{lem:isolated_subset}
    Let $Z$ be a topological space and $X \subseteq Y \subseteq Z$ with the subset topologies. For any $x \in X$, if $x$ is isolated in $Y$, then $x$ is isolated in $X$.
\end{lemma}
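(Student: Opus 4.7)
My plan is to unfold the definitions and use transitivity of the subset topology. By hypothesis, $x$ is isolated in $Y$, so $\{x\}$ is open in the subset topology of $Y$. This means there exists an open set $U \in \Omega(Z)$ such that $\{x\} = U \cap Y$.

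Next I would intersect with $X$. Since $X \subseteq Y$, we have $U \cap X = U \cap Y \cap X = \{x\} \cap X$, and since $x \in X$ by assumption, this equals $\{x\}$. But $U \cap X$ is, by definition, an open set in the subset topology on $X$ inherited from $Z$ (and this is the same topology as the one inherited via $Y$, a point which need not be belabored). Hence $\{x\}$ is open in $X$, i.e., $x$ is isolated in $X$.

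There is no real obstacle here; the only subtlety is verifying that the two ways of giving $X$ a subset topology (directly from $Z$, or via $Y$) agree, which follows from the same one-line computation $(U \cap Y) \cap X = U \cap X$ applied to an arbitrary open $U$ of $Z$. The statement and proof are essentially the observation that restricting an open singleton to a smaller subset still yields an open singleton.
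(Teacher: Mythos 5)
Your proof is correct and is essentially identical to the paper's: both take $U \in \Omega(Z)$ with $\{x\} = U \cap Y$ and observe that $U \cap X = U \cap Y \cap X = \{x\} \cap X = \{x\}$ since $x \in X$. Your remark about the two subset topologies on $X$ agreeing is a fine (if unnecessary) aside; nothing is missing.
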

\begin{proof}
    Let $x \in X$. Assume $x$ is isolated in $Y$, then $\{x\} \in \Omega(Y)$ so there is $U \in \Omega(Z)$ such that $\{x\} = U \cap Y$. As $x \in X$, $\{x\} = \{x\} \cap X = U \cap Y \cap X = U \cap X \in \Omega(X)$ so that $x$ is isolated in $X$.
\end{proof}

As a consequence, the Cantor-Bendixson derivative is monotone.
\begin{corollary}
    Let $Z$ be a topological space and $X \subseteq Y \subseteq Z$ with the subset topologies. Then $X' \subseteq Y'$.
\end{corollary}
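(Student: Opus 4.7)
The plan is to unfold the definition of the derivative and apply the preceding lemma directly. Recall that $X' = X \setminus \{x \in X \mid x \text{ is isolated in } X\}$, and similarly for $Y'$. So I want to show that every $x \in X'$ also lies in $Y'$, i.e., that $x \in Y$ and $x$ is not isolated in $Y$.

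First, I would fix an arbitrary $x \in X'$. Membership in $Y$ is immediate, since $x \in X \subseteq Y$. The nontrivial content is showing that $x$ is not isolated in $Y$. I would argue by contrapositive: if $x$ were isolated in $Y$, then by Lemma~\ref{lem:isolated_subset} (applied to the inclusions $X \subseteq Y \subseteq Z$) $x$ would be isolated in $X$, contradicting $x \in X'$. Hence $x \in Y'$.

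There is no real obstacle here: the whole argument is a one-line consequence of the lemma, plus the trivial observation that the inclusion $X \subseteq Y$ preserves the point. The only thing worth stating explicitly is that Lemma~\ref{lem:isolated_subset} applies verbatim because both $X$ and $Y$ carry their subset topologies from $Z$, which is exactly the hypothesis of the corollary.
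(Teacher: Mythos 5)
Your proof is correct and follows exactly the same route as the paper: take $x \in X'$, note $x \in X \subseteq Y$, and use Lemma~\ref{lem:isolated_subset} contrapositively to conclude $x$ is not isolated in $Y$. Nothing to add.
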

\begin{proof}
    Let $x \in X'$. As $X' \subseteq X$ we have $x \in X$. If $x$ is isolated in $Y$, then by Lemma~\ref{lem:isolated_subset}, $x$ is isolated in $X$ which is contradicting $x \in X'$. So $x$ is not isolated in $Y$, that is, $x \in Y'$.
\end{proof}

The Cantor-Bendixson derivative preserves the fact of being homeomorphic.
\begin{proposition}
    Let $X$ and $Y$ be homeomorphic topological spaces. Then $X'$ and $Y'$ are homeomorphic.
\end{proposition}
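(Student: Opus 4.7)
The plan is to fix a homeomorphism $f : X \to Y$ and show that it restricts to a homeomorphism $f|_{X'} : X' \to Y'$, with $X'$ and $Y'$ carrying the subspace topologies inherited from $X$ and $Y$ respectively. This requires two things: that $f$ maps $X'$ bijectively onto $Y'$, and that the restriction is bicontinuous for the subspace topologies.

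The first step is to prove that homeomorphisms preserve isolated points in both directions. Given $x \in X$, the singleton $\{x\}$ is open in $X$ iff $f(\{x\}) = \{f(x)\}$ is open in $Y$: the forward direction uses that $f$ is an open map, and the reverse direction uses that $f$ is continuous (applied to $f^{-1}(\{f(x)\}) = \{x\}$). Since $f$ is a bijection, this bi-implication yields $f(X \setminus X') = Y \setminus Y'$ and therefore $f(X') = Y'$, so $f$ restricts to a bijection $g \colon X' \to Y'$.

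The second step is to check that $g$ is a homeomorphism for the subspace topologies, which I would do via the general fact that a homeomorphism restricts to a homeomorphism between any subspace and its image. Concretely, any open set of $Y'$ has the form $V \cap Y'$ with $V \in \Omega(Y)$, and $g^{-1}(V \cap Y') = f^{-1}(V) \cap X'$ is open in $X'$ since $f^{-1}(V) \in \Omega(X)$; continuity of $g^{-1}$ is obtained symmetrically by running the same argument with $f^{-1}$ in the role of $f$.

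The main obstacle is essentially bookkeeping: one must be careful not to confuse isolation in $X$ versus isolation in the ambient space, and to keep straight that $X'$ inherits its topology from $X$ (not from some larger space). Once the characterisation of isolated points via open singletons is in hand, both the bijection and the bicontinuity fall out directly from the fact that $f$ is a bijective open-and-continuous map, so no substantive difficulty remains.
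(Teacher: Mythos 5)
Your proposal is correct and follows essentially the same route as the paper: restrict a homeomorphism $f$ to $X' \to Y'$ by showing isolated points correspond under $f$ (the paper phrases this as well-definedness of the restrictions of $f$ and $f^{-1}$, which are then mutually inverse), and then verify bicontinuity directly from the description of subspace opens as $V \cap Y'$. No gaps.
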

\begin{proof}
    Let $f : X \to Y$ be a homeomorphism. We show that it restricts to a homeomorphism $f' : X' \to Y'$.
    \begin{itemize}
        \item First see that if $f(x) \in Y$ is isolated, then $f^{-1}(\{f(x)\}) = \{x\}$ is open, hence $x \in X$ is isolated. This proves that the restriction $f' : X' \to Y'$ is well-defined.
        \item Similarly, the restriction $(f^{-1})' : Y' \to X'$ is well-defined. It is straightforward to check that $f'$ and $(f^{-1})'$ are inverse to each other, showing that $f'$ is a bijection.
        \item Let $U \cap Y' \in \Omega(Y')$, where $U \in \Omega(Y)$. Then $f'^{-1}(U \cap Y') = f^{-1}(U) \cap X' \in \Omega(X')$, whence $f'$ is continuous. \item Similarly, $(f^{-1})'$ is continuous, hence $f' : X' \to Y'$ is a homeomorphism.
    \end{itemize}
\end{proof}

\subsection{Derivating Trees}

\paragraph{Topologies on Trees}

\begin{proposition}
    The sets $\{u\Sigma^\omega \mid u \in \Sigma^*\}$ form a clopen basis of the product topology of $\Sigma^\omega$, where $\Sigma$ is given the discrete topology.
\end{proposition}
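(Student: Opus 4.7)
The plan is to unpack the definition of the product topology and show that each $u\Sigma^\omega$ is (i) open, (ii) closed, and (iii) that the collection actually generates every open set. Recall that the product topology on $\Sigma^\omega$ is generated by the subbasis $\{\pi_n^{-1}(V) \mid n \in \omega, V \subseteq \Sigma\}$, where $\pi_n : \Sigma^\omega \to \Sigma$ sends $w$ to $w_{n+1}$ (or $w_n$, depending on the indexing convention). Since $\Sigma$ is discrete every subset of $\Sigma$ is open, so the basic open sets are the finite intersections $\bigcap_{i \in F} \pi_i^{-1}(V_i)$ with $F \subseteq \omega$ finite and $V_i \subseteq \Sigma$.

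First I would check openness. For $u \in \Sigma^*$ with $|u| = n$, observe that
\[ u\Sigma^\omega = \{w \in \Sigma^\omega \mid w_{\mid n} = u\} = \bigcap_{i = 1}^{n} \pi_i^{-1}(\{u_i\}), \]
a finite intersection of subbasis elements, hence open. The case $u = \varepsilon$ gives $u\Sigma^\omega = \Sigma^\omega$, still open.

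Next I would show that these sets form a basis. Start with an arbitrary basic open set $B = \bigcap_{i \in F} \pi_i^{-1}(V_i)$ and let $N = \max(F)$ (taking $N = 0$ if $F = \emptyset$). A point $w \in \Sigma^\omega$ lies in $B$ iff its prefix $w_{\mid N}$ lies in the finite set $S = \{u \in \Sigma^N \mid u_i \in V_i \text{ for all } i \in F\}$, so
\[ B = \biguplus_{u \in S} u\Sigma^\omega. \]
Thus every basic open set of the product topology, and therefore every open set by taking further unions, is a union of sets of the required form.

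Finally, for the clopen part, I would write the complement explicitly. For $|u| = n$, a word $w$ fails to be in $u\Sigma^\omega$ iff $w_i \neq u_i$ for some $1 \leq i \leq n$, giving
\[ \Sigma^\omega \setminus u\Sigma^\omega = \bigcup_{i=1}^{n} \pi_i^{-1}(\Sigma \setminus \{u_i\}), \]
which is open, so $u\Sigma^\omega$ is closed. There is no real obstacle here; the only mild subtlety is keeping the indexing of the projections consistent with the paper's convention that letters in a word are numbered starting from $1$.
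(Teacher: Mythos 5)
Your proposal is correct and follows essentially the same route as the paper: show each cone $u\Sigma^\omega$ is a finite intersection of coordinate constraints (hence open), write every basic open of the product topology as a union of cones, and exhibit the complement of a cone as open. The only cosmetic differences are that you work with the projection subbasis where the paper works with products of opens directly, and the paper additionally expresses the complement of a cone as a union of other cones of the same length; neither affects the substance.
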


\begin{proof}
    Let $C = \{u\Sigma^\omega \mid u \in \Sigma^*\}$ be the set of \emph{cones}.
    Recall that the product topology of $\Sigma^\omega$ is generated by products $\prod_{n \in \omega} U_n$ where $U_n \in \Omega(X)$, with $U_n = \Sigma$ for all but finitely many $n$s. First we see that $C \subseteq \Omega(\Sigma^\omega)$ because $u\Sigma^\omega = \{u_1\} \times \ldots \times \{u_n\} \times \Sigma^\omega$ and every letter is open in the discrete topology of $\Sigma$. Conversely, any basic open of $\Omega(\Sigma^\omega)$ can be written
    \[ \prod_{1 \leq i \leq n} U_i \times \Sigma^\omega = \bigcup_{u \in \Sigma^n, u_i \in U_i} u\Sigma^\omega \]
    so that the topology generated by $C$ is exactly the product topology. Moreover, for any $w \in u\Sigma^\omega \cap v\Sigma^\omega$, assuming without loss of generality that $|u| \geq |v|$, we have $w \in u\Sigma^\omega \subseteq u\Sigma^\omega \cap v\Sigma^\omega$. As, additionally, $C$ covers $\Sigma^\omega$ because $\Sigma^\omega = \varepsilon\Sigma^\omega \in C$, we have that $C$ is a base of the topology its generates. Finally, let us show that every $u\Sigma^\omega$ is also closed. We have
    \[ \Sigma^\omega \setminus u\Sigma^\omega = \bigcup_{v \in \Sigma^{|u|} \setminus \{u\}} v\Sigma^\omega \in \Omega(\Sigma^\omega)\]
\end{proof}


\paragraph{Steady Convergence.}
Let $w^{(n)} \in \Sigma^\omega$ for every $n \in \omega$. If there is $w \in \Sigma^\omega$ such that $\bigcap_{n \in \omega} w^{(n)}_{\mid n} \Sigma^\omega = \{w\}$, then we write $w^{(n)} \to w$ and say that the sequence $w^{(n)}$ converges steadily to $w$.


\begin{lemma} \label{lem:sequential_convergence}
    Let $C \in C(\Sigma^\omega)$ and $w^{(n)} \to w$ with all $w^{(n)} \in C$, then $w \in C$.
\end{lemma}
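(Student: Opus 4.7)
The plan is to prove this by contradiction using the cone basis for the topology on $\Sigma^\omega$. Assume $w \notin C$. Since $C$ is closed, $\Sigma^\omega \setminus C$ is open, and since the cones $\{u\Sigma^\omega \mid u \in \Sigma^*\}$ form a basis (by the previous proposition), there exists $u \in \Sigma^*$ with $w \in u\Sigma^\omega \subseteq \Sigma^\omega \setminus C$. Let $k = |u|$.

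Next I would exploit the definition of steady convergence: $\{w\} = \bigcap_{n \in \omega} w^{(n)}_{\mid n}\Sigma^\omega$. In particular $w \in w^{(n)}_{\mid n}\Sigma^\omega$ for every $n$, which means $w^{(n)}_{\mid n}$ is a prefix of $w$. For any $n \geq k$, both $u$ and $w^{(n)}_{\mid n}$ are prefixes of $w$, with $|u| = k \leq n = |w^{(n)}_{\mid n}|$, so $u$ is a prefix of $w^{(n)}_{\mid n}$. Since $w^{(n)}_{\mid n}$ is by definition a prefix of $w^{(n)}$, we obtain $w^{(n)} \in u\Sigma^\omega \subseteq \Sigma^\omega \setminus C$, contradicting the hypothesis that $w^{(n)} \in C$.

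The proof is essentially bookkeeping with prefixes, and there is no real obstacle: the only thing to be careful about is ensuring one picks a cone small enough to be disjoint from $C$ (immediate from the basis property) and large enough in index $n$ so that $|u| \leq n$ forces $u$ to be a prefix of $w^{(n)}_{\mid n}$ rather than the other way around.
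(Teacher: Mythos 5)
Your proof is correct and follows essentially the same route as the paper's: cover the open complement of $C$ by cones, locate $w$ in one cone $u\Sigma^\omega$, and use steady convergence to force some $w^{(n)}$ into that same cone, yielding a contradiction. The only cosmetic difference is that the paper takes exactly $n = |u|$ (so that $u = w^{(n)}_{\mid n}$) whereas you allow any $n \geq |u|$; both work.
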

\begin{proof}
    As $C$ is closed, $\Sigma^\omega \setminus C$ is open, hence we can write
    \[ \Sigma^\omega \setminus C = \bigcup_{i \in I} u_i \Sigma^\omega \]
    for some finite words $u_i \in \Sigma^*$. Assume towards a contradiction that $w \notin C$. Let $i \in I$ be such that $w \in u_i\Sigma^\omega$. Let $n = |u_i|$, then as $w \in w^{(n)}_{\mid n} \Sigma^\omega$ we have $u_i = w^{(n)}_{\mid n}$. Hence $w^{(n)} \in u_i \Sigma^\omega \subseteq \Sigma^\omega \setminus C$ which is a contradiction.
\end{proof}

\paragraph{Closed Subsets are exactly Pruned Trees.}

A \emph{pruned tree} is a tree $T \in T(\Sigma)$ such that for every $u \in T$, there is some $a \in \Sigma$ with $ua \in T$. Intuitively, a tree is pruned if it has no \emph{finite} branch. The set of pruned trees is denoted by $pT(\Sigma) \subseteq T(\Sigma)$.

\begin{proposition} \label{prop:closed_pruned_bij}
    There is a bijective correspondence between closed subsets of $\Sigma^\omega$ and the set of pruned trees on $\Sigma$:
    \[ C(\Sigma^\omega) \cong pT(\Sigma) \]
\end{proposition}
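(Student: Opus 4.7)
The plan is to exhibit an explicit bijection between $pT(\Sigma)$ and $C(\Sigma^\omega)$ given by $\Phi \colon T \mapsto [T]$ with inverse $\Psi \colon C \mapsto \Pref(C)$. First I would verify that both maps are well-defined, then check they are mutually inverse.

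Well-definedness of $\Phi$: if $w \in \Sigma^\omega \setminus [T]$, there is some $n$ with $w_{\mid n} \notin T$, and then $w \in w_{\mid n}\Sigma^\omega \subseteq \Sigma^\omega \setminus [T]$; thus the complement of $[T]$ is a union of cones, hence open, so $[T]$ is closed. Well-definedness of $\Psi$: an earlier proposition already gives $\Pref(C) \in T(\Sigma)$, and pruning is automatic because every $u \in \Pref(C)$ is a finite prefix $u = w_{\mid n}$ of some infinite $w \in C$, so $u w_{n+1} = w_{\mid n+1} \in \Pref(C)$.

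For the two round-trips, the identity $\Pref([T]) = T$ on a pruned tree $T$ splits into the easy inclusion $\Pref([T]) \subseteq T$ (true for any tree, since prefixes of an infinite branch of $T$ lie in $T$) and the delicate inclusion $T \subseteq \Pref([T])$. The latter is where the pruned hypothesis is essential: given $u \in T$, I would iterate the pruning condition to pick letters $a_1, a_2, \ldots$ with $u a_1 \cdots a_k \in T$ for every $k$, producing an infinite branch $w = u a_1 a_2 \cdots \in [T]$ extending $u$. The identity $[\Pref(C)] = C$ on a closed $C$ has the easy inclusion $C \subseteq [\Pref(C)]$ from $\Pref(w) \subseteq \Pref(C)$ for $w \in C$; for the converse, given $w \in [\Pref(C)]$, each $w_{\mid n} \in \Pref(C)$ yields a witness $w^{(n)} \in C$ with $w^{(n)}_{\mid n} = w_{\mid n}$. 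Since $\bigcap_n w^{(n)}_{\mid n}\Sigma^\omega = \bigcap_n w_{\mid n}\Sigma^\omega = \{w\}$, the sequence $(w^{(n)})$ converges steadily to $w$, and Lemma~\ref{lem:sequential_convergence} forces $w \in C$.

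The main obstacle is the inductive extension argument establishing $T \subseteq \Pref([T])$: it is the only step that genuinely relies on the pruned hypothesis, and it requires countable dependent choice to assemble the letters $a_k$ into an infinite branch of $T$ through the given node $u$.
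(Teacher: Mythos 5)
Your proposal is correct and follows essentially the same route as the paper's own proof: the same pair of maps $\Pref$ and $[-]$, the same verification that $[T]$ is closed via a union of cones, the same pruning argument for $\Pref(C)$, the same extension argument for $T \subseteq \Pref([T])$, and the same appeal to steady convergence and Lemma~\ref{lem:sequential_convergence} for $[\Pref(C)] \subseteq C$. Your remark that the extension step uses dependent choice is a fair observation the paper leaves implicit.
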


\begin{proof}
    Define the maps
    \begin{align*}
        \varphi \colon & C(\Sigma^\omega) \to pT(\Sigma)                                                    \\
                       & C \mapsto \Pref(C) = \{u \in \Sigma^* \mid C \cap u\Sigma^\infty \neq \emptyset \} \\
        \psi \colon    & pT(\Sigma) \to C(\Sigma^\omega)                                                    \\
                       & T \mapsto [T] = \{w \in \Sigma^\omega \mid \Pref(w) \subseteq T\}
    \end{align*}
    First we show that $\varphi$ and $\psi$ are well-defined. Let $C \in C(\Sigma^\omega)$. Then $\Pref(C)$ is prefix-closed hence a tree. Let us show that it is pruned. Let $w_{\mid n} \in \Pref(C)$ for some $w \in C$ and $n \in \omega$. Then, because $w \in \Sigma^\omega$, we have $w_{\mid n+1} = w_{\mid n} w_{n+1} \in \Pref(C)$, hence $\Pref(C)$ is pruned and $\varphi$ is well-defined. Now let $T \in T(\Sigma)$ be a (not necessarily pruned) tree and let us show that $[T]$ is closed in $\Sigma^\omega$. Because
    \begin{align*}
        w \in \Sigma^\omega \setminus [T]
         & \iff \exists n \in \omega, w_{\mid n} \in \Sigma^* \setminus T \\
         & \iff \exists u \in \Sigma^* \setminus T, w \in u\Sigma^\omega  \\
         & \iff w \in \bigcup_{u \in \Sigma^* \setminus T} u\Sigma^\omega
    \end{align*}
    we have
    \begin{align*}
        \Sigma^\omega \setminus [T]
         & = \bigcup_{u \in \Sigma^* \setminus T} u\Sigma^\omega \in \Omega(\Sigma^\omega)
    \end{align*}
    Now let us show that $\psi \circ \varphi$ and $\varphi \circ \psi$ are identities. Let $T \in pT(\Sigma)$ and let us show that $\Pref([T]) = T$ by double inclusion. Let $u \in \Pref([T])$, we can find $w \in [T]$ and $n \in \omega$ such that $u = w_{\mid n}$. As $w \in [T]$, we know $w_{\mid n} \in T$, hence $u \in T$. Conversely, let $u \in T$. As $T$ is pruned, we can define, by repetitively extending $u$ into arbitrarily long words in $T$, some $w \in [T]$ such that $w_{\mid n} = u$. Hence $u \in \Pref([T])$.

    Now let $C \in C(\Sigma^\omega)$ and show that $[\Pref(C)] = C$ by double inclusion. Let $w \in C$ and $n \in \omega$, then $w_{\mid n} \in \Pref(C)$ hence $w \in [\Pref(C)]$. Conversely, let $w \in [\Pref(C)]$. For any $n \in \omega$, we know $w_{\mid n} \in \Pref(C)$, hence there is $w^{(n)} \in C$ such that $w^{(n)}_{\mid n} = w_{\mid n}$. We have $\bigcap_{n \in \omega} w^{(n)}_{\mid n} \Sigma^\omega = \{w\}$ i.e. $w^{(n)} \to w$. According to Lemma~\ref{lem:sequential_convergence}, the limit $w \in C$.
\end{proof}

\begin{corollary} \label{cor:pruning}
    For any tree $T \in T(\Sigma)$, the tree $\Pref([T])$ is the unique pruned tree that has the same infinite branches as $T$.
\end{corollary}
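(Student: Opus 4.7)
The plan is to deduce this corollary directly from the bijection established in Proposition~\ref{prop:closed_pruned_bij}, with $\varphi = \Pref(-)$ and $\psi = [-]$. The proof has two parts: verifying that $\Pref([T])$ satisfies the claimed properties (existence), and then showing no other pruned tree does (uniqueness).

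First I would establish existence. For an arbitrary $T \in T(\Sigma)$ (not necessarily pruned), the argument in Proposition~\ref{prop:closed_pruned_bij} already shows $[T] \in C(\Sigma^\omega)$, since the proof that $\psi$ is well-defined never used pruning. Therefore $\Pref([T]) = \varphi([T])$ lies in $pT(\Sigma)$, and by the identity $\psi \circ \varphi = \id_{C(\Sigma^\omega)}$ we immediately get $[\Pref([T])] = [T]$. So $\Pref([T])$ is a pruned tree with the same infinite branches as $T$.

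Next I would argue uniqueness. Suppose $T' \in pT(\Sigma)$ satisfies $[T'] = [T]$. Applying $\varphi \circ \psi = \id_{pT(\Sigma)}$ from Proposition~\ref{prop:closed_pruned_bij}, we have $T' = \Pref([T']) = \Pref([T])$, which gives uniqueness.

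There is no real obstacle here: the corollary is essentially a restatement of the bijection $C(\Sigma^\omega) \cong pT(\Sigma)$ applied to $[T]$. The only subtlety worth flagging is that the input tree $T$ need not be pruned, so we cannot apply $\varphi \circ \psi = \id$ to $T$ itself; we must go through the closed set $[T]$ and use $\psi \circ \varphi = \id$ on the $C(\Sigma^\omega)$ side instead.
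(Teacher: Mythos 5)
Your proposal is correct and follows essentially the same route as the paper: both deduce $[\Pref([T])]=[T]$ from the fact that $[T]$ is closed (established in the proof of Proposition~\ref{prop:closed_pruned_bij} without any pruning hypothesis) together with $\psi\circ\varphi=\id$, and both get uniqueness from $\varphi\circ\psi=\id$ on pruned trees. The only cosmetic difference is that the paper re-verifies directly that $\Pref([T])$ is pruned, whereas you cite the well-definedness of $\varphi$ on $C(\Sigma^\omega)$; both are fine.
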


\begin{proof}
    Let $T \in T(\Sigma)$. Let $u \in \Pref([T])$, we can write $u = w_{\mid n}$ for some $w \in [T]$ and $n \in \omega$. Then $w_{\mid n+1} \in \Pref([T])$ as well, so that $\Pref([T])$ is pruned. Now let us show that $[T] = [\Pref([T])]$. According to the proof of Proposition~\ref{prop:closed_pruned_bij}, $[T]$ is closed. Therefore as $\psi \circ \varphi$ is the identity, we have $[\Pref([T])] = [T]$. Moreover, $\Pref([T])$ is the unique pruned tree with that property. Indeed, let $T_1, T_2 \in pT(\Sigma)$ be such that $[T_1] = [T_2]$. Then, using that $\varphi \circ \psi$ is the identity, $T_1 = \Pref([T_1]) = \Pref([T_2]) = T_2$.
\end{proof}

From now on, we completely identify $C(\Sigma^\omega)$ and $pT(\Sigma)$, using explicitly or implicitly the bijection from Proposition~\ref{prop:closed_pruned_bij}. That implies identifying a pruned tree with its set of infinite branches. The map $T(\Sigma) \to pT(\Sigma)$ from Corollary~\ref{cor:pruning} given by $T \mapsto \Pref([T])$ consists in pruning a tree, which is, cutting its finite branches. It allows to sometimes identify also non-necessarily pruned trees with their set of infinite branches.

Any tree $T \in T(\Sigma)$ is given the subset topology inherited from the canonical topology on $\Sigma^\omega$.

\begin{proposition}
    Let $T \in T(\Sigma)$. For an infinite branch $w \in [T]$, the following are equivalent:
    \begin{enumerate}
        \item The point $w$ is isolated in $[T]$.
        \item There exists $N \in \omega$ such that $w_{\mid N}\Sigma^\omega \cap [T] = \{w\}$.
        \item The set $\{n \in \omega \mid \exists w' \in [T] \setminus \{w\}, w_{\mid n} = w'_{\mid n}\}$ is finite.
        \item There is no sequence $w^{(n)} \in [T] \setminus \{w\}$ such that $w^{(n)} \to w$.
    \end{enumerate}
\end{proposition}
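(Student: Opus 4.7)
The plan is to establish a cycle of implications, specifically $1 \Leftrightarrow 2 \Leftrightarrow 3$, then $2 \Rightarrow 4$, and close the loop with the contrapositive $\neg 3 \Rightarrow \neg 4$. The pervasive tool is the fact, established in the previous subsection, that the cones $\{u\Sigma^\omega \mid u \in \Sigma^*\}$ form a basis of the topology of $\Sigma^\omega$.

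For $1 \Leftrightarrow 2$, I would just unfold the definitions: $w$ is isolated in $[T]$ iff $\{w\} = U \cap [T]$ for some $U \in \Omega(\Sigma^\omega)$. Writing $U$ as a union of cones and picking a cone $u\Sigma^\omega$ containing $w$ with $u\Sigma^\omega \cap [T] = \{w\}$, the condition $w \in u\Sigma^\omega$ forces $u = w_{\mid |u|}$, which yields statement 2 with $N = |u|$; the converse is immediate since $w_{\mid N}\Sigma^\omega$ is open. For $2 \Leftrightarrow 3$, observe that when $N$ witnesses statement 2 and $n \geq N$, the inclusion $w_{\mid n}\Sigma^\omega \subseteq w_{\mid N}\Sigma^\omega$ forces every $w' \in [T]$ with $w'_{\mid n} = w_{\mid n}$ to equal $w$, so the set $S$ from statement 3 lies in $\{0, \ldots, N-1\}$; conversely, taking $N = \max(S) + 1$ (or $N = 0$ if $S = \emptyset$) recovers statement 2.

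For $2 \Rightarrow 4$, let $N$ be as in statement 2 and assume $w^{(n)} \to w$ with all $w^{(n)} \in [T]$. The definition of steady convergence places $w$ in $w^{(N)}_{\mid N}\Sigma^\omega$, forcing $w^{(N)}_{\mid N} = w_{\mid N}$ and then $w^{(N)} \in w_{\mid N}\Sigma^\omega \cap [T] = \{w\}$, so $w^{(N)} = w$ and the sequence cannot live entirely in $[T] \setminus \{w\}$. For $\neg 3 \Rightarrow \neg 4$, assume $S$ is infinite; for each $k \in \omega$ pick some $n_k \in S$ with $n_k \geq k$ (possible since $S$ is infinite) together with a witness $w^{(k)} \in [T] \setminus \{w\}$ satisfying $w^{(k)}_{\mid n_k} = w_{\mid n_k}$, whence also $w^{(k)}_{\mid k} = w_{\mid k}$. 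Then $\bigcap_{k} w^{(k)}_{\mid k}\Sigma^\omega = \bigcap_{k} w_{\mid k}\Sigma^\omega = \{w\}$, so $w^{(k)} \to w$ steadily with all $w^{(k)} \neq w$, contradicting statement 4.

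The argument is largely bookkeeping, and I do not anticipate any serious obstacle; the only spot asking for slight care is the sequence construction in $\neg 3 \Rightarrow \neg 4$, where one must exploit the infiniteness of $S$ to choose $n_k \geq k$ so that $w^{(k)}$ agrees with $w$ on at least its first $k$ letters, which is exactly the condition steady convergence imposes.
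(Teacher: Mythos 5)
Your proposal is correct and follows essentially the same route as the paper: cones from the clopen basis for $1 \Leftrightarrow 2$, the bound $S \subseteq \{0,\ldots,N-1\}$ for $2 \Leftrightarrow 3$, and unfolding steady convergence for the link with $4$ (you close the cycle via $\neg 3 \Rightarrow \neg 4$ where the paper argues $2 \Leftrightarrow 4$, but the content is identical). As a small bonus, your explicit handling of the case $S = \emptyset$ when setting $N = \max(S)+1$ patches an edge case the paper's proof glosses over.
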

\begin{proof} A $w \in [T]$ is isolated iff $\{w\}$ is open in the subset topology inherited from $\Sigma^\omega$, that is, iff there is $U \in \Omega(\Sigma^\omega)$ such that $U \cap [T] = \{w\}$.

    $\boxed{1 \Leftrightarrow 2}$ Let $U \in \Omega(\Sigma^\omega)$ such that $U \cap [T] = \{w\}$. Using the clopen basis, find $u \in \Sigma^*$ such that $w \in u\Sigma^\omega \subseteq U$. Let $n = |u|$, then $u = w_{\mid n}$ and by intersecting both sides of $U \cap [T] = \{w\}$ with $w_{\mid n}\Sigma^\omega$ we get $w_{\mid n} \Sigma^\omega \cap [T] = \{w\}$. Conversely, if for some $n \in \omega$ we have $w_{\mid n} \Sigma^\omega \cap [T] = \{w\}$ then $w$ is isolated in $[T]$ because $w_{\mid n} \Sigma^\omega \in \Omega(\Sigma^\omega)$.

    $\boxed{2 \Leftrightarrow 3}$ Let $N \in \omega$ such that $w_{\mid N} \Sigma^\omega \cap [T] = \{w\}$. For any $n \geq N$, if $w' \in [T]$ and $w_{\mid n} = w'_{\mid n}$ then $w' \in w_{\mid n} \Sigma^\omega \cap [T] \subseteq w_{\mid N} \Sigma^\omega \cap [T] = \{w\}$ hence $w' = w$. Therefore, $B := \{n \in \omega \mid \exists w' \in [T] \setminus \{w\}, w_{\mid n} = w'_{\mid n}\} \subseteq \{0,\ldots,N-1\}$ is finite. Conversely, if $B$ is finite then letting $N = \max B + 1 \in \omega$, we have $w_{\mid n} \cap [T] = \{w\}$.

    $\boxed{2 \Leftrightarrow 4}$ If $w^{(n)} \in [T] \setminus \{w\}$ is a sequence such that $w^{(n)} \to w$, then for all $n \in \omega$ we have $w_{\mid n} \Sigma^\omega \cap [T] \supseteq \{w,w^{(n)}\}$ so this set can never be a singleton. Conversely, if for every $n \in \omega$ there is $w^{(n)} \in [T] \setminus \{w\}$ such that $w^{(n)}_{\mid n} = w_{\mid n}$ then $\bigcap_{n \in \omega} w^{(n)}_{\mid n} \Sigma^\omega = \{w\}$ i.e. $w^{(n)} \to w$.
\end{proof}

\begin{lemma} \label{lem:convergence_corestrict}
    Let $w^{(n)} \to w$.
    \begin{enumerate}
        \item For any $p\in\omega$, $w^{(p+n)\mid p} \to w^{\mid p}$. Moreover, for every $n \in \omega$, $w^{(p+n)\mid p} = w^{\mid p} \iff w^{(p+n)} = w$.
        \item For any $u \in \Sigma^*$, $uw^{(n)} \to uw$.
    \end{enumerate}
\end{lemma}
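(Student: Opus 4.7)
The plan is to unfold both items directly from the definition $w^{(n)} \to w$, which says $\bigcap_{n \in \omega} w^{(n)}_{\mid n}\Sigma^\omega = \{w\}$. Before tackling either item, I would extract two small facts. First, since $w$ lies in every cone $w^{(n)}_{\mid n}\Sigma^\omega$, we have $w^{(n)}_{\mid n} = w_{\mid n}$ for all $n$. Second, because $w_{\mid n+1}\Sigma^\omega \subseteq w_{\mid n}\Sigma^\omega$ the family is a decreasing chain of cones, so for any $p \in \omega$ the tail intersection $\bigcap_{n \geq p} w^{(n)}_{\mid n}\Sigma^\omega$ still equals $\{w\}$.

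For item 1, I would use the identity $w_{\mid p+n} = w_{\mid p}(w^{\mid p})_{\mid n}$ stated in the section on restriction, together with its analogue for $w^{(p+n)}$. Since $w^{(p+n)}_{\mid p+n} = w_{\mid p+n}$, taking the prefix of length $p$ gives $w^{(p+n)}_{\mid p} = w_{\mid p}$, and cancelling this common prefix yields $(w^{(p+n)\mid p})_{\mid n} = (w^{\mid p})_{\mid n}$. This places $w^{\mid p}$ in every cone $(w^{(p+n)\mid p})_{\mid n}\Sigma^\omega$. For uniqueness, if some $v$ lies in every such cone, then $w_{\mid p}v$ lies in every $w^{(p+n)}_{\mid p+n}\Sigma^\omega = w_{\mid p+n}\Sigma^\omega$, and the tail-intersection remark forces $w_{\mid p}v = w$, hence $v = w^{\mid p}$. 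For the ``moreover'' part, I would just reconstitute $w^{(p+n)} = w^{(p+n)}_{\mid p}\,w^{(p+n)\mid p} = w_{\mid p}\,w^{(p+n)\mid p}$ and use the established $w^{(p+n)}_{\mid p} = w_{\mid p}$ to conclude both implications.

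For item 2, set $p = |u|$. I would show $uw \in (uw^{(n)})_{\mid n}\Sigma^\omega$ for every $n$ by splitting on $n$: when $n \leq p$ both restrictions equal $u_{\mid n}$; when $n > p$ we get $(uw^{(n)})_{\mid n} = u\,w^{(n)}_{\mid n-p}$ and $(uw)_{\mid n} = u\,w_{\mid n-p}$, which coincide since $w^{(n)}_{\mid n-p}$ is a prefix of $w^{(n)}_{\mid n} = w_{\mid n}$. For uniqueness, any $v$ in the full intersection is forced to begin with $u$ by taking $n = p$, so $v = uv'$, and then $v' \in \bigcap_{n \geq p} w_{\mid n-p}\Sigma^\omega = \bigcap_{m \in \omega} w_{\mid m}\Sigma^\omega = \{w\}$.

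No step looks genuinely hard; the only thing to watch is the indexing shift between the sequence and its co-restriction, which is entirely controlled by the two preliminary observations $w^{(n)}_{\mid n} = w_{\mid n}$ and the decreasing-chain property allowing tail intersections to replace the full one.
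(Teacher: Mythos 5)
Your proposal is correct and follows essentially the same route as the paper: reduce steady convergence to the prefix-agreement condition $w^{(n)}_{\mid n} = w_{\mid n}$, use the identity $w_{\mid p+n} = w_{\mid p}(w^{\mid p})_{\mid n}$ for item 1, and split on $n \leq p$ versus $n > p$ for item 2. The only difference is that you spell out the singleton/uniqueness half of the intersection explicitly, whereas the paper leaves it implicit in the equivalence between $\bigcap_n v^{(n)}_{\mid n}\Sigma^\omega = \{v\}$ and prefix agreement; this is a presentational, not a mathematical, divergence.
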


\begin{proof}
    \begin{enumerate}

        \item Assume $w^{(n)} \to w$ i.e. $\bigcap_{n \in \omega} w^{(n)}_{\mid n} \Sigma^\omega = \{w\}$. That is, for every $n \in \omega$ we have $w^{(n)}_{\mid n} = w_{\mid n}$. We must prove that for every $n \in \omega$ we have $(w^{(p+n)\mid p})_{\mid n} = (w^{\mid p})_{\mid n}$. This follows from
              \[ w^{(p+n)}_{\mid p} (w^{(p+n)\mid p})_{\mid n} = w^{(p+n)}_{\mid p+n} = w_{\mid p+n} = w_{\mid p} (w^{\mid p})_{\mid n} \]
              Moreover, if for some $n \in \omega$ we have $w^{(p+n)\mid p} = w^{\mid p}$, then as $w^{(p+n)}_{\mid p+n} = w_{\mid p+n}$ we have $w^{(p+n)}_{\mid p} = w_{\mid p}$ so that
              \[ w^{(p+n)} = w^{(p+n)}_{\mid p} w^{(p+n)\mid p} = w_{\mid p} w^{\mid p} = w \]
              and the converse implication is obvious.

        \item Let $p = |u|$. For every $n \leq p$ we have $(uw^{(n)})_{\mid n} = u_{\mid n} = (uw)_{\mid n}$. For $n > p$ we have $(uw^{(n)})_{\mid n} = uw^{(n)}_{\mid n-p} = uw_{\mid n-p} = (uw^{(n)})_{\mid n}$.

    \end{enumerate}
\end{proof}

\begin{proposition}
    For any $T \in T(\Sigma)$ and $u \in \Sigma^*$,
    \begin{enumerate}
        \item The set $u[T]$ is closed and $(u[T])' = u[T]'$.
        \item The set $u^{-1}[T]$ is closed and $(u^{-1}[T])' = u^{-1}[T]'$.
    \end{enumerate}
\end{proposition}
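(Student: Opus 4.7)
The plan is to reduce both claims to the proposition saying that the Cantor-Bendixson derivative is preserved by homeomorphisms, by exploiting that the map $\phi_u : \Sigma^\omega \to u\Sigma^\omega$ defined by $\phi_u(w) = uw$ is a homeomorphism of $\Sigma^\omega$ onto the clopen subspace $u\Sigma^\omega$ of $\Sigma^\omega$ (clopen because $\Sigma^\omega \setminus u\Sigma^\omega = \bigcup_{v \in \Sigma^{|u|} \setminus \{u\}} v\Sigma^\omega$).

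Both sets are closed by results from Section~2: one has $u[T] = [\Pref(u) \cup uT]$ and $u^{-1}[T] = [u^{-1}T]$, and Proposition~\ref{prop:closed_pruned_bij} guarantees that $[S]$ is closed in $\Sigma^\omega$ for any tree $S$. For claim (1), I would first verify that $\phi_u$ restricts to a homeomorphism $[T] \to u[T]$ by tracking basic cones in both directions (a preimage $\phi_u^{-1}(v\Sigma^\omega \cap u[T])$ is either $[T]$, $\emptyset$, or $(u^{-1}v)\Sigma^\omega \cap [T]$ depending on whether $v$ is a prefix of $u$ or vice versa). The homeomorphism-preservation proposition then produces a homeomorphism $[T]' \to (u[T])'$, whose underlying set-theoretic equality reads exactly $u[T]' = (u[T])'$.

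For claim (2), the same map $\phi_u$ restricts to a homeomorphism $u^{-1}[T] \to uu^{-1}[T] = [T] \cap u\Sigma^\omega$. The extra ingredient here is the observation that $[T] \cap u\Sigma^\omega$ is open in $[T]$ (because $u\Sigma^\omega$ is open in $\Sigma^\omega$), together with the fact that isolated points of an open subspace $Y$ of $X$ coincide with isolated points of $X$ that happen to lie in $Y$; this yields $([T] \cap u\Sigma^\omega)' = [T]' \cap u\Sigma^\omega$. Pulling back through $\phi_u^{-1}$ and using $u^{-1}(A \cap u\Sigma^\omega) = u^{-1}A$ then gives $(u^{-1}[T])' = u^{-1}([T]' \cap u\Sigma^\omega) = u^{-1}[T]'$.

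I do not anticipate a real obstacle: the only mildly non-routine ingredient is the small lemma about isolated points in open subspaces, which follows from a one-line argument with the definition of the subset topology. Everything else is bookkeeping combining the closedness identities from Section~2 with the homeomorphism-preservation proposition.
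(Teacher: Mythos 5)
Your proof is correct, but it takes a genuinely different route from the paper's. The paper argues sequentially: it characterises non-isolated points via steadily convergent sequences and then uses Lemma~\ref{lem:convergence_corestrict} to transport a witnessing sequence back and forth between $[T]$ and $u[T]$ (respectively between $[T]$ and $u^{-1}[T]$), establishing each equality of derivatives by double inclusion. You instead observe that $\phi_u(w) = uw$ is a homeomorphism of $\Sigma^\omega$ onto the clopen cone $u\Sigma^\omega$, restrict it to $[T] \to u[T]$ and to $u^{-1}[T] \to [T] \cap u\Sigma^\omega$, and invoke the proposition that the Cantor-Bendixson derivative is preserved by homeomorphisms; for the second claim you add the lemma that for an \emph{open} subspace $Y \subseteq X$ one has $Y' = X' \cap Y$, which is correct and is a genuine strengthening of Lemma~\ref{lem:isolated_subset} (that lemma, for arbitrary subspaces, gives only one inclusion). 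Your route buys conceptual economy and reusability: it bypasses the steady-convergence machinery entirely, and both of your ingredients (invariance under homeomorphism, trace of the derivative on open subspaces) iterate transfinitely, so the same argument yields $(u[T])^{(\alpha)} = u[T]^{(\alpha)}$ and $(u^{-1}[T])^{(\alpha)} = u^{-1}[T]^{(\alpha)}$ for every ordinal $\alpha$ with no extra work. The paper's sequence-based proof is more hands-on but stays within the toolkit it has already built. The only details worth writing out in your version are the verification that the restriction of $\phi_u$ to $[T]$ is a homeomorphism onto its image (your cone-tracking sketch suffices) and the final set-level identity $u^{-1}(A \cap u\Sigma^\omega) = u^{-1}A$; both are routine.
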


\begin{proof}
    Let $T \in T(\Sigma)$ and $u \in \Sigma^*$.
    \begin{enumerate}
        \item As $u[T] = [\Pref(u) \cup uT]$, $u[T]$ is closed.
        \item[] $\boxed{\subseteq}$ Let $w \in (u[T])'$. As $w$ is not isolated in $u[T]$, we can find $w^{(n)} \in u[T] \setminus \{w\}$ such that $w^{(n)} \to w$. Let $p = |u|$. By Lemma~\ref{lem:convergence_corestrict}, we have $w^{(p+n)\mid p} \to w^{\mid p}$ and $w^{(p+n)\mid p} \in [T] \setminus \{w^{\mid p}\}$. Therefore $w^{\mid p} \in [T]'$ so that $w \in u[T]'$.
        \item[] $\boxed{\supseteq}$ Let $w \in u[T]'$ and $p = |u|$. Then $w^{\mid p} \in [T]'$ so that there is a sequence $z^{(n)} \to w^{\mid p}$ with $z^{(n)} \in [T] \setminus \{w^{\mid p}\}$. According to Lemma~\ref{lem:convergence_corestrict}, we have $uz^{(n)} \to uw^{\mid p} = w_{\mid p}w^{\mid p} = w$, and obviously $uz^{(n)} \in u[T] \setminus \{w\}$, therefore $w \in (u[T])'$.
        \item As $u^{-1}[T] = [u^{-1}T]$, $u^{-1}[T]$ is closed.
        \item[] $\boxed{\subseteq}$ Let $w \in (u^{-1}[T])'$. Let be a sequence $w^{(n)} \to w$ with $w^{(n)} \in u^{-1}[T]$, i.e. $uw^{(n)} \in [T]$, and $w^{(n)} \neq w$ for all $n \in \omega$. We have $uw^{(n)} \in [T] \setminus \{uw\}$. Also, by Lemma~\ref{lem:convergence_corestrict}, we have $uw^{(n)} \to uw$. Hence $uw \in [T]'$ so $w \in u^{-1}[T]'$.
        \item[] $\boxed{\supseteq}$ Let $w \in u^{-1}[T]'$, that is, $uw \in [T]'$. Find a sequence $v^{(n)} \to uw$ with $v^{(n)} \in [T] \setminus \{uw\}$ for all $n \in \omega$. Let $p = |u|$. As $v^{(n)} \to uw$ we have $v^{(p+n)}_{\mid p} = u$ for every $n \in \omega$, hence $uv^{(p+n)\mid p} = v^{(p+n)}_{\mid p} v^{(p+n)\mid p} = v \in [T]$, that is, $v^{(p+n)\mid p} \in u^{-1}[T]$. By Lemma~\ref{lem:convergence_corestrict}, we have $v^{(p+n)\mid p} \to (uw)^{\mid p} = w$. Still by Lemma~\ref{lem:convergence_corestrict}, as $v^{(n)} \neq uw$ we have $v^{(p+n)\mid p} \neq w$. Therefore $w \in (u^{-1}[T])'$.
    \end{enumerate}
\end{proof}

\section{The Cantor-Bendixson Rank}

The Cantor-Bendixson derivative can be transfinitely iterated. The derivating process always stabilises at some ordinal $\alpha$. This $\alpha$ is called the Cantor-Bendixson rank of the topological space. This section studies the interplay between rank and tree decompositions.

\subsection{On Topological Spaces}

One can transfinitely iterate the topological derivative as follows. For a topological space $X$, let
\begin{align*}
     & X^{(0)} =  X                                          &                               \\
     & X^{(\alpha+1)} = (X^{(\alpha)})'                      &                               \\
     & X^{(\lambda)} = \bigcap_{\beta < \lambda} X^{(\beta)} & \lambda \text{ limit ordinal}
\end{align*}

The sequence $X^{(\alpha)}$ is decreasing, hence stationary.

\begin{definition}[Cantor-Bendixson rank]
    For any topological space $X$, there is a least ordinal $\alpha$ such that $X^{(\alpha)} = X^{(\alpha+1)}$. This ordinal is called the \emph{Cantor-Bendixson rank} of $X$ and is denoted as $\alpha = \CB(X)$.
\end{definition}

By definition, the space $X^{(\CB(X))}$ is \emph{perfect} in the sense that it has no isolated points. It is called the \emph{perfect kernel} of $X$.

\subsection{On Trees}

\begin{definition}[Thin tree]
    A tree $T \in T(\Sigma)$ is \emph{thin} if its perfect kernel is empty, that is, $[T]^{(\CB(T))} = \emptyset$.
\end{definition}

\begin{proposition}
    If $\Sigma$ is countable, thin trees in $T(\Sigma)$ are exactly the trees with countably many infinite branches.
\end{proposition}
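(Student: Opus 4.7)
The plan is to prove each direction separately. The forward direction uses countability of $\Sigma^*$ via a witness assignment; the converse uses a Cantor-style construction to produce uncountably many branches inside any non-empty perfect kernel.

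\emph{Thin implies countably many branches.} Suppose $[T]^{(\CB(T))} = \emptyset$. I would decompose $[T]$ layerwise as
\[ [T] = \bigsqcup_{\beta < \CB(T)} L_\beta, \qquad L_\beta := [T]^{(\beta)} \setminus [T]^{(\beta+1)}, \]
where disjointness follows from the nesting of the derivatives, and every $w \in [T]$ belongs to some $L_\beta$ because the least ordinal $\alpha$ with $w \notin [T]^{(\alpha)}$ cannot be a limit (at limit stages the derivative is the intersection of previous ones). Each $w \in L_\beta$ is isolated in the closed set $[T]^{(\beta)} \subseteq \Sigma^\omega$, so the same clopen-basis argument as in the earlier isolated-points proposition lets me pick a witness $u(w) \in \Sigma^*$ with $u(w)\Sigma^\omega \cap [T]^{(\beta)} = \{w\}$. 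The globally defined map $w \mapsto u(w)$ is injective: if $w \in L_{\beta_1}$ and $w' \in L_{\beta_2}$ with $\beta_1 \leq \beta_2$ share a witness $u$, then $w' \in [T]^{(\beta_2)} \cap u\Sigma^\omega \subseteq [T]^{(\beta_1)} \cap u\Sigma^\omega = \{w\}$, whence $w = w'$. Since $\Sigma$ is countable so is $\Sigma^*$, and thus $[T]$ is countable.

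\emph{Countably many branches implies thin.} Conversely, suppose $[T]$ is countable. The perfect kernel $K := [T]^{(\CB(T))}$ is a closed subset of $\Sigma^\omega$ with no isolated points; I claim $K = \emptyset$. Assume otherwise and build a dyadic family inside $K$: recursively, for each $s \in \{0,1\}^*$, produce $w_s \in K$ and $u_s \in \Sigma^*$ with $w_s \in u_s \Sigma^\omega$, $|u_s| \geq |s|$, and $u_{s0}, u_{s1}$ incomparable extensions of $u_s$. The recursive step is possible because $w_s \in K$ is not isolated, so some $w' \neq w_s$ can be found in $K \cap u_s\Sigma^\omega$; $w'$ and $w_s$ agree on $u_s$ but eventually differ, so long enough common prefixes of each yield the required incomparable $u_{s0}, u_{s1}$. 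For every $x \in \{0,1\}^\omega$ the nested cones $u_{x_{\mid n}}\Sigma^\omega$ shrink to a unique limit $\bar x$, and since $w_{x_{\mid n}} \to \bar x$ steadily with $w_{x_{\mid n}} \in K$, Lemma~\ref{lem:sequential_convergence} yields $\bar x \in K$. Distinct $x$'s give distinct $\bar x$'s because they eventually inhabit incomparable cones, producing uncountably many points of $K \subseteq [T]$, a contradiction.

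The main obstacle is the converse: orchestrating the recursion so that the lengths $|u_s|$ genuinely grow, which is what makes the map $x \mapsto \bar x$ well-defined and injective. The forward direction is, by contrast, a bookkeeping argument; the only non-obvious step is noticing that the witness map can be defined globally on $[T]$ rather than separately on each layer, at which point injectivity follows automatically from the nesting of the derivatives.
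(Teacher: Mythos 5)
Your proof is correct, but it takes a genuinely different route from the paper. The paper treats the statement as an instance of the Cantor--Bendixson theorem for Polish spaces: it notes that $\Sigma^\omega$ is Polish, that $[T]$ is therefore Polish, and then imports from Kechris the decomposition of a Polish space into its perfect kernel plus a countable open part, together with the fact that non-empty perfect subsets have the cardinality of the continuum. You instead reprove both of these facts from scratch in the special case at hand: your layerwise witness assignment $w \mapsto u(w) \in \Sigma^*$ is exactly the standard argument that the scattered part $[T] \setminus [T]^{(\CB(T))}$ is countable (and it is right that the witness map must be defined globally, with injectivity coming from the nesting $[T]^{(\beta_2)} \subseteq [T]^{(\beta_1)}$), and your dyadic cone construction is the usual Cantor scheme embedding $\{0,1\}^\omega$ into a non-empty perfect closed set, correctly glued to the paper's own notion of steady convergence via Lemma~\ref{lem:sequential_convergence} and the length condition $|u_s| \geq |s|$. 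What your approach buys is self-containedness: it uses only tools already developed in the paper (the clopen basis of cones, steady convergence, closedness of iterated derivatives) and avoids the black-box appeal to descriptive set theory; the cost is length, and the paper's version makes the connection to the general theory explicit. One detail worth making explicit if you write this up in full: in the forward direction the witnesses can be chosen canonically (take the shortest $u$ with $u\Sigma^\omega \cap [T]^{(\beta)} = \{w\}$), so no choice principle beyond what the recursion in the converse already needs is involved.
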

\begin{proof}
    The proof relies on some basic descriptive-set-theoretic notions that have not been introduced above. The reader can use~\cite{Kechris95} as a general reference. Assume $\Sigma$ is countable. Then $\Sigma^\omega$ is Polish. Moreover, as any closed subset of a Polish space is polish, any tree in $T(\Sigma)$ is Polish. Any Polish space $X$ can be written as a disjoint union $X = P \uplus U$ where $P$ is its maximal perfect Polish subspace (the perfect kernel) and $U$ is the union of all its countable open subsets (itself countable). Non-empty perfect sets in a Polish space have the cardinality of the continuum.
    \begin{enumerate}
        \item[$\Rightarrow$] Let $T \in T(\Sigma)$ thin. Then the perfect kernel $P$ of $[T]$ is empty, so its countable complement is $U = [T]$. Therefore, $T$ has countably many infinite branches.
        \item[$\Leftarrow$] Let $T \in T(\Sigma)$ have countably many infinite branches. The perfect kernel $P$ of $[T]$ cannot have the cardinality of the continuum, therefore it is empty, so that $T$ is thin.
    \end{enumerate}
\end{proof}

\begin{remark}
    At the contrary, when there is no restriction on the cardinality of $\Sigma$, thin trees can have many infinite branches. For instance, the tree $\Pref(\{s^\omega \mid s \in \Sigma\})$ is thin, but has $|\Sigma|$-many infinite branches.
\end{remark}


It is also interesting to have a notion of derivative for trees seen as sets of finite words themselves.

\begin{definition}
    For any $T \in T(\Sigma)$ and ordinal $\alpha$, let $T^{(\alpha)} = \Pref([T]^{(\alpha)})$.
\end{definition}

\begin{remark} \label{rem:same_branches}
    Then we have $[T^{(\alpha)}] = [T]^{(\alpha)}$.
\end{remark}





We obtain a notion of Cantor-Bendixson rank which is equivalent to the other one.
\begin{proposition}
    Let $\CB(T)$ be the minimal ordinal $\alpha$ such that $T^{(\alpha)} = T^{(\alpha+1)}$. Then $\CB(T) = \CB([T])$.
\end{proposition}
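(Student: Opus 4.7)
The plan is to reduce the equality of ranks to the fact that the map $C \mapsto \Pref(C)$ is a bijection on closed subsets of $\Sigma^\omega$, so that stabilisation of $T^{(\alpha)}$ is equivalent to stabilisation of $[T]^{(\alpha)}$ at each ordinal stage. Concretely, I want to show that for every ordinal $\alpha$,
\[ T^{(\alpha)} = T^{(\alpha+1)} \iff [T]^{(\alpha)} = [T]^{(\alpha+1)}. \]
Once this equivalence is established, the minimal $\alpha$ satisfying either side is the same, which gives $\CB(T) = \CB([T])$ directly from the two definitions of rank.

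First I would verify, by transfinite induction on $\alpha$, that every iterate $[T]^{(\alpha)}$ is closed in $\Sigma^\omega$. The base case $[T]^{(0)} = [T]$ is closed by Proposition~\ref{prop:closed_pruned_bij}. For successors, the general observation made right after the definition of the derivative says that $X'$ is closed in $X$; since $X = [T]^{(\alpha)}$ is closed in $\Sigma^\omega$ by the induction hypothesis, $[T]^{(\alpha+1)}$ is closed in $\Sigma^\omega$ as well (closed in closed is closed). For limit $\lambda$, $[T]^{(\lambda)} = \bigcap_{\beta < \lambda} [T]^{(\beta)}$ is an intersection of closed sets, hence closed.

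Next, by definition $T^{(\alpha)} = \Pref([T]^{(\alpha)})$ and $T^{(\alpha+1)} = \Pref([T]^{(\alpha+1)})$. Proposition~\ref{prop:closed_pruned_bij} provides the bijection $\varphi : C(\Sigma^\omega) \to pT(\Sigma)$ given by $C \mapsto \Pref(C)$; in particular $\Pref$ is injective on closed subsets. Applying this to the two closed sets $[T]^{(\alpha)}$ and $[T]^{(\alpha+1)}$ yields exactly the equivalence displayed above.

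There is essentially no obstacle here: the only point requiring care is the transfinite induction confirming that each $[T]^{(\alpha)}$ is genuinely closed in $\Sigma^\omega$ (and not merely in its predecessor with the subset topology), which is needed to apply the bijection $\varphi$. Once this is in hand, the equivalence of the stabilisation conditions is immediate, and hence so is the equality of ranks.
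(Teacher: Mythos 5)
Your proof is correct and follows essentially the same route as the paper: both reduce the claim to the stage-by-stage equivalence $T^{(\alpha)} = T^{(\alpha+1)} \iff [T]^{(\alpha)} = [T]^{(\alpha+1)}$, which rests on the bijection of Proposition~\ref{prop:closed_pruned_bij} together with the definition $T^{(\alpha)} = \Pref([T]^{(\alpha)})$. Your explicit transfinite induction showing each $[T]^{(\alpha)}$ is closed in $\Sigma^\omega$ is a welcome addition, since the paper's argument relies on Remark~\ref{rem:same_branches}, which silently uses exactly that fact.
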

\begin{proof}
    It is sufficient to prove $T^{(\alpha)} = T^{(\alpha+1)} \iff [T]^{(\alpha)} = [T]^{(\alpha+1)}$ for every ordinal $\alpha$. Left-to-right implication is obvious considered Remark~\ref{rem:same_branches}. Assuming $[T]^{(\alpha)} = [T]^{(\alpha+1)}$, we have \[ T^{(\alpha)} = \Pref([T^{(\alpha)}]) = \Pref([T]^{(\alpha)}) = \Pref([T]^{(\alpha+1)}) = \Pref([T^{(\alpha+1)}]) = T^{(\alpha+1)} \] because by definition $T^{(\alpha)}$ is pruned.
\end{proof}
In particular, any tree with no infinite branch, and \emph{a fortiori} any finite tree has rank $0$.

For every $w \in [T]'$, we have
\[ [T]' = \{w\} \cup \biguplus_{u \in w^\circ} uu^{-1}[T]' \]
therefore, as $u^{-1}[T]' = [u^{-1}T]'$
\[ [T]' = \{w\} \cup \biguplus_{u \in w^\circ} u[u^{-1}T]' \]
Hence derivating a tree along a non-isolated branch is the same as derivating all the trees plugged along that branch.


\begin{proposition}
    For any $n \in \omega$, $[B_{n+1}]' = [B_n]$.
\end{proposition}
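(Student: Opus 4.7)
My plan is to induct on $n$, using as the main engine the decomposition $[T]' = \{w\} \cup \biguplus_{u \in w^\circ} u[u^{-1}T]'$ derived in the text just above the proposition, specialised to $T = B_{n+1}$ and $w = a^\omega$ (the leftmost infinite branch). The base case $n=0$ is immediate: $[B_1] = \{a^\omega\}$ is a singleton, the cone $a\Sigma^\omega$ already isolates $a^\omega$ in $[B_1]$, so $[B_1]' = \emptyset = [B_0]$.

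For the inductive step, suppose $[B_n]' = [B_{n-1}]$ with $n \geq 1$. First I would check that $a^\omega \in [B_{n+1}]'$ so the decomposition applies: since $n \geq 1$ makes $[B_n^\perp]$ non-empty, picking any $z$ in it gives a sequence $a^N b z \in [B_{n+1}] \setminus \{a^\omega\}$ that converges steadily to $a^\omega$. The off-words of $a^\omega$ are $(a^\omega)^\circ = \{a^k b \mid k \in \omega\}$, and a direct unfolding of $B_{n+1} = \bigcup_j a^j(\{\varepsilon\} \cup bB_n^\perp)$ shows that only the $j = k$ term contributes, giving $(a^k b)^{-1} B_{n+1} = B_n^\perp$. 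Hence the decomposition collapses to
\[
[B_{n+1}]' = \{a^\omega\} \cup \bigcup_{k \in \omega} a^k b\, [B_n^\perp]'.
\]

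The final ingredient is the identity $[B_n^\perp]' = [B_{n-1}^\perp]$. The involution $\perp$ is a homeomorphism of $\Sigma^\omega$, so the proposition stating that homeomorphisms preserve the derivative gives $(S^\perp)' = (S')^\perp$ for any $S \subseteq \Sigma^\omega$; combined with $[B_n^\perp] = [B_n]^\perp$ and the induction hypothesis, this yields $[B_n^\perp]' = ([B_n]')^\perp = [B_{n-1}]^\perp = [B_{n-1}^\perp]$. Substituting gives $[B_{n+1}]' = \{a^\omega\} \cup \bigcup_k a^k b\, [B_{n-1}^\perp]$, and this matches $[B_n]$ by reading off its infinite branches directly from $B_n = \bigcup_k a^k(\{\varepsilon\} \cup b B_{n-1}^\perp)$ via the root/branch construction propositions. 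The main obstacle I anticipate is the $\perp$-commutativity step: it is not a displayed lemma and must be extracted by applying the homeomorphism-preservation proposition to the continuous involution $\perp$ (restricted to the relevant subspace); a purely combinatorial alternative would be to rerun the isolated-point analysis directly on $[B_n^\perp]$, but the homeomorphism route is cleaner, and everything else is routine bookkeeping with the decomposition formulas already established.
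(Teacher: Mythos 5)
Your proof is correct and follows essentially the same route as the paper's: induction on $n$, the branch decomposition $[T]' = \{a^\omega\} \cup \biguplus_{u \in (a^\omega)^\circ} u[u^{-1}T]'$ along the leftmost branch, and commutation of $\perp$ with $'$ and $[-]$ combined with the induction hypothesis. You supply somewhat more detail than the paper does (explicitly verifying $a^\omega \in [B_{n+1}]'$, computing $(a^k b)^{-1}B_{n+1} = B_n^\perp$, and justifying the $\perp$-commutation via the homeomorphism-preservation proposition), but the argument is the same.
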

\begin{proof}
    Recall that $B_0 = \emptyset$ and $B_{n+1} = \Pref(a^\omega) \cup \biguplus_{k\in \omega} a^k b B_n^\perp$. Prove the result by induction. Initialisation: $[B_1]' = \{a^\omega\}' = \emptyset = [\emptyset] = [B_0]$. Now, assuming $[B_{n+1}]' = [B_n]$, we have, as $a^\omega \in [B_{n+2}]'$,
    \[ [B_{n+2}]' = \{a^\omega\} \cup \biguplus_{k \in \omega} a^k b [B_{n+1}^\perp]' = \{a^\omega\} \cup \biguplus_{k \in \omega} a^k b [B_n]^{\perp} = [B_{n+1}] \]
    where the middle equality is using the induction hypothesis along with commutation of $\perp$ with $'$ and $[-]$:
    \[ [B_{n+1}^\perp]' = ([B_{n+1}]^\perp)' = ([B_{n+1}]')^\perp = [B_n]^\perp \]
\end{proof}

We can iterate transfinitely that construction.
\begin{proposition}
    For every $T \in T(\Sigma)$, every ordinal $\alpha$ and every $w \in [T]^{(\alpha)}$,
    \[ [T]^{(\alpha)} = \{w\} \cup \biguplus_{u \in w^\circ} u[u^{-1}T]^{(\alpha)} \]
\end{proposition}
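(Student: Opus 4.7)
The plan is to proceed by transfinite induction on $\alpha$. The case $\alpha = 0$ is precisely the ``along a branch'' destructive decomposition $[T] = \{w\} \uplus \biguplus_{u \in w^\circ} u u^{-1}[T]$ from Proposition~\ref{prop:destruct_stream} combined with $uu^{-1}[T] = u[u^{-1}T]$. For the successor and limit steps I will lift the ``level one'' formula $[S]' = \{w\} \cup \biguplus_{u \in w^\circ} u[u^{-1}S]'$ (stated just before the $B_{n+1}$ proposition) to higher levels.

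Before treating the inductive steps, I would establish an auxiliary lemma by a separate transfinite induction: for every ordinal $\alpha$ and every $u \in \Sigma^*$, $u^{-1}[T]^{(\alpha)} = [u^{-1}T]^{(\alpha)}$. The base case is the already-proved equality $u^{-1}[T] = [u^{-1}T]$. The successor step applies the earlier proposition $(u^{-1}[S])' = u^{-1}[S]'$ to $S = T^{(\beta)}$ and uses Remark~\ref{rem:same_branches}. The limit step uses that $u^{-1}(\cdot)$ commutes with arbitrary intersections.

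For the successor case $\alpha = \beta + 1$, fix $w \in [T]^{(\beta+1)}$. By Remark~\ref{rem:same_branches}, $[T]^{(\beta)} = [T^{(\beta)}]$, so $w$ is a non-isolated branch of the tree $T^{(\beta)}$, and the level-one formula applied to $T^{(\beta)}$ gives
\[ [T]^{(\beta+1)} = [T^{(\beta)}]' = \{w\} \cup \biguplus_{u \in w^\circ} u \bigl[ u^{-1} T^{(\beta)} \bigr]'. \]
Chaining $[u^{-1}T^{(\beta)}] = u^{-1}[T^{(\beta)}] = u^{-1}[T]^{(\beta)} = [u^{-1}T]^{(\beta)}$ (using the auxiliary lemma) and taking the derivative shows that each factor equals $[u^{-1}T]^{(\beta+1)}$, yielding the claim.

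For the limit case $\alpha = \lambda$, note that $w \in [T]^{(\lambda)}$ implies $w \in [T]^{(\beta)}$ for all $\beta < \lambda$, so the induction hypothesis supplies the decomposition at each level $\beta$. Intersecting over $\beta < \lambda$ would in general be delicate, but here the union is genuinely disjoint: distinct $u, v \in w^\circ$ are incomparable, so $u\Sigma^\omega \cap v\Sigma^\omega = \emptyset$, and each $u\Sigma^\omega$ avoids $\{w\}$. Consequently, for any point $z$, membership in $\{w\} \cup \biguplus_{u \in w^\circ} u[u^{-1}T]^{(\beta)}$ is determined by the unique ``compartment'' to which $z$ belongs (either $z = w$, or the unique $u \in w^\circ$ that is a prefix of $z$), so the intersection commutes with the disjoint union and reduces to $\bigcap_{\beta < \lambda} u [u^{-1}T]^{(\beta)} = u \bigcap_{\beta < \lambda} [u^{-1}T]^{(\beta)} = u[u^{-1}T]^{(\lambda)}$. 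The main obstacle is precisely this bookkeeping in the limit step: one has to verify that the compartments remain honestly separated across all ordinals $\beta < \lambda$, so that no limit point sneaks from one compartment into another or onto the main branch $w$.
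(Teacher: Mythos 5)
Your proof is correct: the auxiliary lemma $u^{-1}[T]^{(\alpha)} = [u^{-1}T]^{(\alpha)}$, the application of the level-one formula to the pruned tree $T^{(\beta)}$ at successor stages, and the compartment-by-compartment intersection at limit stages (which works because distinct off-words in $w^\circ$ are incomparable) all check out. The paper actually states this proposition without any proof, so there is nothing to compare against except the sentence ``We can iterate transfinitely that construction''; your transfinite induction is precisely the argument that sentence gestures at, and it fills the gap faithfully.
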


The Cantor-Bendixson rank of a tree is denoted by $\CB(T)$.

\begin{proposition}
    If $\Sigma$ is finite, then for every thin $T \in T(\Sigma)$, $\CB(T)$ is either zero or a successor ordinal. This is not true in general for infinite $\Sigma$.
\end{proposition}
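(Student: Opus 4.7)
The plan is to treat the two claims separately. For the positive statement, the key is compactness: when $\Sigma$ is finite, $\Sigma^\omega$ is compact (Tychonoff on finite discrete spaces), so $[T]$ and all its iterated derivatives---which are closed subsets of $\Sigma^\omega$---are compact. I argue by contradiction. Assume $\lambda := \CB(T)$ is a nonzero limit ordinal. Minimality of $\CB(T)$ forces $[T]^{(\beta)} \neq \emptyset$ for every $\beta < \lambda$: otherwise $[T]^{(\beta+1)} = \emptyset = [T]^{(\beta)}$ would give $\CB(T) \leq \beta < \lambda$. The decreasing family $([T]^{(\beta)})_{\beta < \lambda}$ of non-empty closed subsets of the compact space $[T]$ has the finite intersection property, hence
\[ [T]^{(\lambda)} = \bigcap_{\beta < \lambda} [T]^{(\beta)} \neq \emptyset. \]
But thinness of $T$ means $[T]^{(\CB(T))} = [T]^{(\lambda)} = \emptyset$, a contradiction.

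For the counterexample with infinite $\Sigma$, I would take an alphabet containing $a, b$ and distinct letters $c_1, c_2, \ldots$, and set
\[ T = \{\varepsilon\} \cup \biguplus_{n \geq 1} c_n B_n, \]
with $B_n$ the $n$-branches tree from Example~\ref{ex:nbranches}. By Proposition~\ref{prop:root_construct}, $T$ is a tree and $[T] = \bigcup_{n \geq 1} c_n [B_n]$. Each $c_n [B_n]$ is clopen in $[T]$ (cut out by the clopen cone $c_n \Sigma^\omega$), so derivation distributes over this union, and by the earlier proposition $(c_n [B_n])' = c_n [B_n]'$. The recursion $[B_{n+1}]' = [B_n]$ established earlier yields $\CB(B_n) = n$, and by transfinite induction
\[ [T]^{(k)} = \bigcup_{n > k} c_n [B_{n-k}] \]
for every $k < \omega$. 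Each of these sets contains $c_{k+1} a^\omega$ and is therefore non-empty, while any branch of $[T]^{(\omega)} = \bigcap_k [T]^{(k)}$ would start with some $c_n$ and hence fail to lie in $[T]^{(n)}$. So $[T]^{(\omega)} = \emptyset$, giving $\CB(T) = \omega$, a limit ordinal, and $T$ is thin.

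The main obstacle I expect is justifying the distributivity of derivation over the union $\bigcup_n c_n [B_n]$ in the counterexample, but this is handled by the clopenness of the cones $c_n \Sigma^\omega$, which makes the $c_n [B_n]$ topologically independent pieces of $[T]$, so isolated points of the whole are exactly the union of isolated points of each piece. Once this is in place, both parts reduce to bookkeeping.
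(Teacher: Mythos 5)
Your proposal is correct, and it is worth comparing to the paper's treatment. For the counterexample your construction is essentially the paper's: the paper takes $\Sigma = \omega \cup \{a,b\}$ and $T = \{\varepsilon\} \cup \biguplus_{n \in \omega} n B_n$, which is your tree up to renaming the letters $n$ as $c_n$, and it performs the same computation $[T]^{(k)} = \biguplus_{n>k} n[B_{n-k}] \neq \emptyset$ with $[T]^{(\omega)} = \emptyset$; your explicit justification that derivation distributes over the disjoint union of clopen cones is a detail the paper waves away with ``it is easy to see,'' and your observation that each piece $c_n[B_n]$ is cut out by the clopen cone $c_n\Sigma^\omega$ (so isolated points localise, via Lemma~\ref{lem:isolated_subset} for one direction) is exactly the right way to fill that in. Where you genuinely go beyond the paper is the positive statement: the paper declares it ``folklore'' and gives no argument, whereas you supply a complete proof via compactness of $\Sigma^\omega$ for finite $\Sigma$ --- the decreasing chain of non-empty closed derivatives below a limit ordinal $\lambda$ has the finite intersection property, so $[T]^{(\lambda)} \neq \emptyset$, contradicting thinness. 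That argument is sound (note it uses thinness exactly where it must, since for non-thin trees nothing prevents stabilisation at a limit being vacuous), and it buys the reader an actual proof where the paper offers only a citation to common knowledge.
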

\begin{proof}
    The first part is folklore. Let us give a counterexample when $\Sigma$ is infinite. Taking $\Sigma = \omega \cup \{a,b\}$, let $T = \{\varepsilon\} \cup \biguplus_{n \in \omega} nB_n$. Then $[T] = \biguplus_{n \in \omega} n[B_n]$. As $[B_0] = \emptyset$ and $[B_{n+1}]' = [B_n]$, it is easy to see that for any $k \in \omega$, $[B_n]^{(k)} = [B_{n-k}] \neq \emptyset$ if $k < n$,  $[B_n]^{(k)} = \emptyset$ otherwise. Therefore, for any $k \in \omega$, $[T]^{(k)} = \biguplus_{n > k} n[B_{n-k}] \neq \emptyset$. But
    \[ [T]^{(\omega)} = \bigcap_{k \in \omega} \biguplus_{n > k} n[B_{n-k}] = \emptyset \]
    so that $\CB(T) = \omega$ is not a successor ordinal.
\end{proof}

\end{document}